\newcommand{\eps}{\varepsilon}
\newcommand\Q{{\mathbb Q}}
\newcommand\Z{{\mathbb Z}}
\newcommand{\II}{{\mathcal I}}
\newcommand{\JJ}{{\mathcal J}}
\newcommand{\MM}{{\mathcal{M}}}
\newcommand{\NN}{{\mathcal{N}}}
\newcommand{\PP}{{\mathcal{P}}}
\newcommand{\bfg}{{\mathbf g}}
\newcommand{\pmin}{p_{\min}}
\newcommand{\pmax}{p_{\max}}
\renewcommand*\l@section[2]{%
  \ifnum \c@tocdepth >\z@
    \addpenalty\@secpenalty
    \addvspace{0.2em \@plus\p@}%
    \setlength\@tempdima{1.5em}%
    \begingroup
      \parindent \z@ \rightskip \@pnumwidth
      \parfillskip -\@pnumwidth
      \leavevmode \bfseries
      \advance\leftskip\@tempdima
      \hskip -\leftskip
      #1\nobreak\hfil \nobreak\hb@xt@\@pnumwidth{\hss #2}\par
    \endgroup
  \fi}
\newcounter{jump}
\begin{document}

\hfuzz 3pt

\title*{Diversity in Parametric Families of Number Fields}
\author{Yuri Bilu, Florian Luca}
\institute{Yuri Bilu \at Institut de Mathématiques de Bordeaux, Université de Bordeaux \& CNRS; \email{yuri@math.u-bordeaux.fr}
\and 
Florian Luca \at School of Mathematics, Wits University, Johannesburg and Centro de Ciencias Matematicas, UNAM, Morelia; \email{Florian.Luca@wits.ac.za}}

\titlerunning{Diversity in Parametric Families of Number Fields}
\authorrunning{Yuri Bilu, Florian Luca}

\maketitle

\abstract*
{Let~$X$ be a projective curve defined over~$\Q$ and ${t\in \Q(X)}$ a non-constant rational function of degree ${\nu\ge 2}$.  For every ${ n \in \Z}$ pick ${P_ n \in X(\bar\Q)}$ such that ${t(P_ n )= n }$.  A result of Dvornicich and Zannier implies that, for  large~$N$,  among the number fields $\Q(P_1), \ldots, \Q(P_N)$ there are  at least $cN/\log N$  distinct; here ${c>0}$ depends only on the degree~$\nu$ and the genus ${\bfg=\bfg(X)}$.  We prove that there are at least ${N/(\log N)^{1-\eta}}$ distinct fields, where ${\eta>0}$  depends only on~$\nu$ and~$\bfg$.}

\abstract
{Let~$X$ be a projective curve defined over~$\Q$ and ${t\in \Q(X)}$ a non-constant rational function of degree ${\nu\ge 2}$.  For every ${ n \in \Z}$ pick a point ${P_ n \in X(\bar\Q)}$ such that ${t(P_ n )= n }$.  A result of Dvornicich and Zannier implies that, for  large~$N$,  among the number fields $\Q(P_1), \ldots, \Q(P_N)$ there are  at least $cN/\log N$  distinct; here ${c>0}$ depends only on the degree~$\nu$ and the genus ${\bfg=\bfg(X)}$.  We prove that there are at least ${N/(\log N)^{1-\eta}}$ distinct fields, where ${\eta>0}$  depends only on~$\nu$ and~$\bfg$.}




\section{Introduction}

\textit{Everywhere in this paper ``curve'' means ``smooth geometrically irreducible projective algebraic curve''. }

\bigskip

Let~$X$ be a  curve over~$\Q$ of genus~$\bfg$ and ${t\in \Q(X)}$ a non-constant rational function of degree ${\nu\ge 2}$. We fix, once and for all, an algebraic closure~$\bar\Q$. 
Our starting point is the celebrated Hilbert Irreducibility Theorem.

\begin{theorem}[Hilbert]
In the above set-up,
for infinitely many  ${ n \in \Z}$  the fiber ${t^{-1}( n )\subset X(\bar \Q)}$ is $\Q$-irreducible; that is, the Galois group $G_{\bar\Q/\Q}$ acts on $t^{-1}( n )$ transitively.
\end{theorem}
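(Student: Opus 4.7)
The plan is to reduce the theorem to the classical polynomial form of Hilbert's irreducibility theorem and then settle it by analyzing Puiseux expansions of the roots at infinity. First I pick a primitive element $y \in \Q(X)$ of the extension $\Q(X)/\Q(t)$ and let $F(T, Y) \in \Q[T, Y]$, of degree $\nu$ in $Y$, be obtained from the minimal polynomial of $y$ over $\Q(T)$ by clearing denominators in $T$. Then $F$ is absolutely irreducible over $\Q(T)$. For all but finitely many $n \in \Z$ (those outside the branch locus, those where $y$ has a pole, and those where the leading coefficient of $F$ in $Y$ vanishes), the $\Q$-irreducibility of the fiber $t^{-1}(n) \subset X(\bar\Q)$ is equivalent to the irreducibility of $F(n, Y) \in \Q[Y]$. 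It therefore suffices to show that $F(n, Y)$ is irreducible in $\Q[Y]$ for infinitely many $n \in \Z$.

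Next I would expand the $\nu$ roots $y_1(T), \ldots, y_\nu(T)$ of $F(T, \cdot) = 0$ as Puiseux series at $T = \infty$, with coefficients in a fixed number field $L$ and, by Eisenstein's theorem on algebraic power series, with uniformly bounded denominators at every finite place. The monodromy group $\Gamma$ of $t : X \to \P^1$ permutes these series transitively, reflecting absolute irreducibility of $F$. Any proper factorization $F(n, Y) = g(Y) h(Y)$ with $g, h \in \Q[Y]$ of positive degree corresponds to a nontrivial bipartition $\{I, I^c\}$ of $\{1, \ldots, \nu\}$ with $g(Y)$ proportional to $\prod_{i \in I}(Y - y_i(n))$ and $h(Y)$ proportional to $\prod_{i \in I^c}(Y - y_i(n))$. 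Since there are only finitely many such bipartitions, if reducibility of the fiber held for all but finitely many $n$, a single bipartition $\{I, I^c\}$ would have to be realized for infinitely many $n$.

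The main obstacle is to rule out such a bipartition. Set $G_I(T, Y) = \prod_{i \in I}(Y - y_i(T))$, whose coefficients are Puiseux series in $T$. If every coefficient of $G_I$ lay in $\Q(T)$, then $G_I$ would be a non-trivial factor of $F$ in $\Q(T)[Y]$, contradicting absolute irreducibility; hence at least one coefficient $\phi(T)$ of $G_I$ is a genuine Puiseux series, not a rational function of $T$. The crux is to show that $\phi(n) \in \Q$ can hold only on a sparse set of integers $n$. The standard way to achieve this combines the bounded-denominator property with a diophantine count: a factorization $F(n, Y) = g(Y) h(Y)$ with prescribed degrees corresponds to an integral point on a proper subvariety of affine space, and, by a sieve or by a $p$-adic argument using good reduction at a well-chosen prime $p$, the number of such integral points with $|n| \leq N$ is $o(N)$. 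Summing over the finitely many bipartitions shows that the set of $n \in [-N, N]$ with $F(n, Y)$ reducible is $o(N)$; in particular it is not cofinite in $\Z$, which proves the theorem.
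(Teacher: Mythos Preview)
The paper does not prove this statement at all: Hilbert's Irreducibility Theorem is quoted as a classical result, and the reader is referred to Chapter~9 of Serre's book for the proof (and for the quantitative form~\eqref{ehilb}). So there is no ``paper's own proof'' to compare against; the relevant benchmark is the classical argument in Serre, which is indeed the Puiseux-series approach you are sketching.

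Your reduction to the polynomial form and the bipartition bookkeeping are correct and standard. The genuine gap is at the step you yourself flag as ``the crux'': showing that, for a fixed bipartition $I$, the symmetric function $\phi(T)=\text{(a coefficient of }G_I)$ can take rational values at integers $n$ only on a sparse set. You assert this follows ``by a sieve or by a $p$-adic argument using good reduction at a well-chosen prime'', but neither of these is carried out, and the formulation via ``an integral point on a proper subvariety'' does not by itself give $o(N)$: the factorization variety you describe generically has dimension~$1$ and its projection to the $n$-line may well be dominant, so properness alone says nothing about the density of rational fibers. What is actually needed (and what Serre's treatment supplies) is an honest diophantine estimate: since $\phi$ is algebraic over $\Q(T)$ of degree $\ge 2$, one controls the number of $n\in[1,N]$ with $\phi(n)\in\Q$ either by the Puiseux-expansion growth argument (Dörge's method, giving $O(N^{1-1/e})$ with $e$ a ramification index) or by bounding integral points on the auxiliary curve $\{(T,\phi)\}$. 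Without one of these made explicit, your argument stops precisely at the non-trivial point, and as written it risks circularity, since ``few rational points in fibers of a degree-$\ge 2$ cover'' is exactly the content of Hilbert's theorem.
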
  
This can also be re-phrased as follows: for every ${ n  \in \Z}$ pick ${P_ n \in t^{-1}( n )}$; then for infinitely many ${ n \in \Z}$ we have ${[\Q(P_ n ):\Q]=\nu}$.  

``Infinitely many'' in the Hilbert Irreducibility Theorem means, in fact, ``overwhelmingly many'': for sufficiently large positive~$N$ we have
\begin{equation}
\label{ehilb}
\bigl|\{n\in [1,N]\cap\Z: \text{$t^{-1}(n)$ is reducible}\}\bigr|\le c(\nu) N^{1/2}.
\end{equation}
Everywhere in the introduction ``sufficiently large'' means ``exceeding a certain positive number depending on~$X$ and~$t$~''.

For the proof of~\eqref{ehilb} we invite the reader  to consult Chapter~9 of Serre's book~\cite{Se97}. See, in particular, Section~9.2 and the theorem on page 134 of~\cite{Se97}, where~\eqref{ehilb} is proved with~$\Q$ replaced by an arbitrary number field and~$\Z$ by its ring of integers. 

Hilbert's Irreducibility Theorem, however, does not answer the following natural question: among the field $\Q(P_ n )$, are there ``many'' distinct (in the fixed algebraic closure~$\bar\Q$)? This question is addressed in the article of Dvornicich and Zannier~\cite{DZ94}, where the following theorem is proved (see \cite[Theorem~2(a)]{DZ94}). 

\begin{theorem}[Dvornicich, Zannier]
\label{tdvz}
In the above set-up, there exists a real number ${c=c(\bfg,\nu)>0}$ 
such that for sufficiently large  integer~$N$ the number field ${\Q(P_1,\ldots, P_N)}$ is of degree at least $e^{cN/\log N}$ over~$\Q$.
\end{theorem}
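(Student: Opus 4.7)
The plan is to bound $[K_N:\Q]$ from below by tracking the new primes that ramify as one climbs the tower ${K_1 \subseteq K_2 \subseteq \cdots}$, where ${K_n := \Q(P_1,\ldots,P_n)}$. The basic principle is: if a rational prime $p$ ramifies in $\Q(P_n)/\Q$ but is unramified in $K_{n-1}/\Q$, then ${\Q(P_n) \not\subseteq K_{n-1}}$, hence ${[K_n:K_{n-1}] \ge 2}$; multiplying these contributions over enough indices $n$ will yield an exponential lower bound on $[K_N:\Q]$.

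First I fix the ramification data. Choose a proper regular model ${\mathcal{X} \to \Spec \Z}$ of $X$ and extend $t$ to a morphism ${\tau\colon \mathcal{X} \to \P^1_\Z}$. Standard discriminant theory then produces a polynomial ${\Delta \in \Z[t]}$ (essentially the norm to $\Q$ of the branch divisor of $\tau$) and a finite set $B$ of bad primes such that, for every ${n \in \Z}$ with ${[\Q(P_n):\Q] = \nu}$ and every ${p \notin B}$, the prime $p$ ramifies in $\Q(P_n)/\Q$ if and only if ${p \mid \Delta(n)}$. By~\eqref{ehilb}, the condition ${[\Q(P_n):\Q] = \nu}$ fails for at most $O(N^{1/2})$ indices ${n \in [1,N]}$, which is negligible in what follows. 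Set ${S_n := \{p : p \text{ ramifies in } \Q(P_n)/\Q\}}$; for each ${p \in \bigcup_{n \le N} S_n}$, let $n(p)$ denote the least ${n \in [1,N]}$ for which ${p \in S_n}$.

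The main combinatorial step is to establish that ${M := |\{n(p) : p \in \bigcup_n S_n\}|}$ is at least ${cN/\log N}$. Applying Chebotarev's density theorem to the splitting field of $\Delta$ over $\Q$, a positive proportion $\delta > 0$ of rational primes $p$ admit a root of $\Delta$ modulo $p$; for any such prime $p$ with ${p \le N}$ (and $p$ exceeding a fixed constant), some ${n \in [1,N]}$ lies in the corresponding residue class modulo $p$, so $p \in S_n$, giving $|\bigcup_n S_n| \gg N/\log N$. In the favorable subcase where the critical values of $t$ lie in $\Q$, one checks directly that $n(p)$ lies within ${O(1)}$ of $p$ for large $p$, so the map ${p \mapsto n(p)}$ is ${O(1)}$-to-one and ${M \gg N/\log N}$ follows. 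I expect the main obstacle to lie in the general case of algebraic critical values: one must rule out that too many primes share the same value $n(p)$, for which I would invoke equidistribution of the roots of $\Delta$ modulo $p$ as $p$ varies, together with a Brun--Titchmarsh-type sieve to bound $|\{p : n(p) = n\}|$ sharply enough.

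Finally, for each ${n \in \{n(p) : p\}}$ there is some prime ramified in $\Q(P_n)$ but not in $K_{n-1}$, so ${[K_n:K_{n-1}] \ge 2}$ by the basic principle above. Multiplying over these ${\ge cN/\log N}$ indices,
\[
[K_N:\Q] = \prod_{n=1}^{N} [K_n:K_{n-1}] \ge 2^{cN/\log N} = e^{c'N/\log N},
\]
which gives Theorem~\ref{tdvz}.
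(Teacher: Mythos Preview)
Your overall strategy---tracking newly ramified primes to force degree jumps in the tower---is exactly the Dvornicich--Zannier argument, and the final multiplication step is correct. The gap lies in the middle paragraph, where you bound $M = |\{n(p)\}|$. A side issue first: the ``if and only if'' you state for ramification is too strong; from $p \mid \Delta(n)$ one cannot deduce that $p$ ramifies in $\Q(P_n)$---one needs $p \,\|\, \Delta(n)$, and even then the conclusion is only that $p$ ramifies in $\Q(P)$ for \emph{some} $P$ in the fiber (this suffices once the fiber is irreducible, which you already arrange via Hilbert). The main problem, however, is the assertion that ``$n(p)$ lies within $O(1)$ of $p$'' in the rational-critical-value case. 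This is simply false: if, say, $\Delta(T) = (T-a)(T-b)$ with $a, b \in \Z$, then the least positive $n$ with $p \mid \Delta(n)$ is $\min(a \bmod p,\, b \bmod p)$, which ranges over all of $[1, p]$ as $p$ varies and is not $p + O(1)$. Without some further restriction a single $n$ can serve as $n(p)$ for as many as $\omega(\Delta(n))$ primes, which is unbounded; your proposed Brun--Titchmarsh repair for the general case is left undeveloped and would in any event be overkill.

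The paper (whose $F$ is your $\Delta$) closes this gap with an elementary device that handles all cases at once: restrict attention to primes $p \in \PP_F \cap [N/4, N/2]$. For each such $p$ one finds $n \le 2p \le N$ with $p \,\|\, F(n)$ (take any $n_0 \le p$ with $p \mid F(n_0)$, and replace $n_0$ by $n_0 + p$ if $p^2 \mid F(n_0)$), so $p$ is primitive for some $n \le N$. Conversely, for a fixed $n \le N$, any prime $p \ge N/4 \ge n/4$ dividing $F(n)$ is one of at most $d = \deg F$ such primes, since $(n/4)^{d+1} > |F(n)|$ for large $n$. Hence the map $p \mapsto n(p)$, restricted to $\PP_F \cap [N/4, N/2]$, is at most $d$-to-one, and $M \ge d^{-1}\,\bigl|\PP_F \cap [N/4, N/2]\bigr| \gg N/\log N$ follows immediately---no case distinction on the critical values, and no sieve.
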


One may note that the statement holds true independently of the choice of the points $P_ n $. 

An immediate consequence is the following result.

\begin{corollary}
\label{cdvz}
In the above set-up, there exists a real number ${c=c(\bfg,\nu)>0}$ such that for every sufficiently large integer~$N$, there are at least $cN/\log N$ distinct fields among the number fields $\Q(P_1), \ldots, \Q(P_N)$.  
\end{corollary}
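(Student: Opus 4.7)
The plan is to derive the corollary directly from Theorem~\ref{tdvz} by a degree-of-compositum argument. Each point $P_n$ lies in the fiber $t^{-1}(n)$, which consists of at most~$\nu$ geometric points, so $[\Q(P_n):\Q]\le \nu$ for every~$n$. This is the only piece of information about the individual fields that I will use.

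Let $k$ denote the number of distinct fields among $\Q(P_1),\ldots,\Q(P_N)$, and let $\Q(P_{i_1}),\ldots,\Q(P_{i_k})$ be a list of representatives. First I would observe that
\[
\Q(P_1,\ldots,P_N)=\Q(P_{i_1})\cdots\Q(P_{i_k}),
\]
because every $P_n$ lies in at least one of the fields on the right. Next I would bound the degree of a compositum by the product of the individual degrees: since each factor has degree at most~$\nu$ over~$\Q$,
\[
[\Q(P_1,\ldots,P_N):\Q]\le \nu^k.
\]

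Now I would invoke Theorem~\ref{tdvz}, which provides a constant ${c=c(\bfg,\nu)>0}$ with $[\Q(P_1,\ldots,P_N):\Q]\ge e^{cN/\log N}$ for all sufficiently large~$N$. Combining the two inequalities yields $\nu^k\ge e^{cN/\log N}$, whence
\[
k\ge \frac{c}{\log\nu}\cdot\frac{N}{\log N},
\]
which is the claimed bound after renaming the constant. There is no genuine obstacle here: the whole content is packaged in Theorem~\ref{tdvz}, and the only subtlety is the (entirely standard) multiplicativity bound for the degree of a compositum, together with the trivial observation that the degree of each $\Q(P_n)$ is at most $\nu$.
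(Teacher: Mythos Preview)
Your argument is correct and is exactly the intended derivation: the paper simply states that Corollary~\ref{cdvz} is ``an immediate consequence'' of Theorem~\ref{tdvz}, and the degree-of-compositum bound $[\Q(P_1,\ldots,P_N):\Q]\le\nu^k$ together with the lower bound from Theorem~\ref{tdvz} is precisely how one unpacks this.
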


Theorem~\ref{tdvz} is best possible, as obvious examples show.  Say, if~$X$ is (the projectivization of) the plane curve ${t=u^2}$ and~$t$ is  the coordinate function, then  the field 
$$
\Q(P_1,\ldots, P_N)=\Q(\sqrt1,\sqrt2, \ldots,\sqrt N)=\Q(\sqrt p: p\le N)
$$
is of degree ${2^{\pi(N)}\le e^{cN/\log N}}$. 

On the contrary, Corollary~\ref{cdvz} does not seem to be best possible. For instance, in the same example, if~$ n $ runs the square-free numbers among ${1, \ldots, N}$ then the fields ${Q(P_ n )=\Q(\sqrt n )}$ are pairwise distinct. It is well-known that among ${1, \ldots, N}$ there are, asymptotically,  ${\zeta(2)^{-1}N}$ square-free numbers as ${N\to\infty}$.

We suggest the following conjecture.

\begin{conjecture}[Weak Diversity Conjecture]
\label{cours}
Let~$X$ be a  curve over~$\Q$ 
and ${t\in \Q(X)}$ a non-constant $\Q$-rational function of degree at least~$2$. Then there exists a real number ${c>0}$ such that  for every sufficiently large integer~$N$,  among the number fields $\Q(P_1), \ldots, \Q(P_N)$ there are at least $cN$ distinct. 
\end{conjecture}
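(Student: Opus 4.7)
The strategy is to bound the multiplicity with which any single number field can occur as some $\Q(P_n)$. For each number field $K$ with $[K:\Q]\le \nu$, set
\[
M_K(N) := \bigl|\{ n\in[1,N]\cap\Z : \Q(P_n)=K\}\bigr|,
\]
so that $\sum_K M_K(N) = N$ and the number of distinct fields among $\Q(P_1),\ldots,\Q(P_N)$ equals $|\{K : M_K(N)\ge 1\}|$. Hence a uniform bound $M_K(N)\le C(\bfg,\nu)$ would imply the conjecture. Since $\Q(P_n)=K$ forces $P_n\in X(K)$, we have the basic geometric estimate
\[
M_K(N) \le \bigl|\{P\in X(K) : t(P)\in[1,N]\cap\Z\}\bigr|.
\]

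When $\bfg\ge 2$, a uniform bound on $|X(K)|$ over degree-$\nu$ fields $K$ is the content of the Caporaso--Harris--Mazur conjecture, which is famously open, so I would avoid this route. Instead I would attack the conjecture via discriminants. The Chevalley--Weil theorem provides a fixed nonzero $F\in\Z[x]$, built from the branch data of $t$, such that $\mathrm{disc}(\Q(P_n)/\Q)$ divides a bounded power of $F(n)$, with equality up to a bounded factor whenever the specialization is ``generic'' (no extraneous ramification, no collapse of the fiber into a proper subfield). Combining this with classical sieve estimates on squarefree values of polynomials, one expects that for $\gg N$ values of $n\in[1,N]$ the squarefree part of $F(n)$ is pairwise distinct, so the discriminants of $\Q(P_n)$ take $\gg N$ distinct values; Schmidt's bound on the count of number fields of bounded degree and given discriminant then converts this into $\gg N$ distinct fields $\Q(P_n)$. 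The ``genericity'' condition is supplied, for all but $O(N^{1/2})$ exceptional $n$, by the sharp form of Hilbert irreducibility in~\eqref{ehilb}.

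The main obstacle is disentangling the arithmetic content of the sieve from the Galois structure of the cover $X\to\P^1_t$. Two distinct $n_1,n_2$ may produce the same squarefree discriminant by accident, and more seriously the fields $\Q(P_{n_1})$ and $\Q(P_{n_2})$ may coincide for reasons invisible to the discriminant (for instance, via CM-type coincidences in elliptic or modular covers). Quantifying the scarcity of such coincidences uniformly in the geometry of $X$ and $t$ is precisely what appears to obstruct the passage from the Dvornicich--Zannier bound $cN/\log N$ to the conjectural linear bound $cN$, and is presumably the reason the present paper reaches only the intermediate estimate $N/(\log N)^{1-\eta}$.
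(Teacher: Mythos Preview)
The statement you are addressing is labelled \emph{Conjecture} in the paper, and indeed the paper does not prove it: the main result (Theorem~\ref{tmain}) only achieves $N/(\log N)^{1-\eta}$, and the authors explicitly leave the linear bound $cN$ open. So there is no ``paper's own proof'' to compare your proposal against; your task was in effect to settle an open problem, and your write-up candidly concludes by identifying the same bottleneck that stops the authors.

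That said, your sketch contains two concrete gaps worth naming, beyond the vague ``disentangling'' you mention at the end. First, the Chevalley--Weil input is weaker than you use it. Properties~\ref{iramdiv} and~\ref{idivram} of the paper say that large $p$ ramifying in \emph{some} $\Q(P)$ with $P\in t^{-1}(n)$ must divide $F(n)$, and that $p\,\|\,F(n)$ forces ramification in \emph{some} such $\Q(P)$; neither statement pins down ramification in the \emph{particular} field $\Q(P_n)$ you have chosen. Hence there is no ``equality up to a bounded factor'' between $\mathrm{disc}(\Q(P_n))$ and the squarefree kernel of $F(n)$, and distinct squarefree kernels of $F(n_1),F(n_2)$ do not directly give distinct discriminants of $\Q(P_{n_1}),\Q(P_{n_2})$.

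Second, the appeal to ``classical sieve estimates on squarefree values of polynomials'' is illusory once $\deg F\ge 4$: proving that $F(n)$ is squarefree (or even that its squarefree kernel is large) for a positive proportion of $n\le N$ is open in general and is known to follow from the $abc$ conjecture. Since~\eqref{erihu} allows $d$ as large as $2\bfg+2\nu-2$, this is a genuine obstruction, not a technicality. The paper sidesteps both issues by working not with the full squarefree part of $F(n)$ but with carefully designed squarefree divisors $m\in\MM_F(x)$ having about $\eps\delta\log\log x$ prime factors, for which one can force $m\,\|\,F(n)$ (Lemma~\ref{lnuone}) and control multiplicities (Proposition~\ref{psect}); the price paid is precisely the residual $(\log N)^{1-\eta}$ in the denominator.
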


There is also a stronger conjecture, attributed in~\cite{DZ94,DZ95} to Schinzel, which  relates to Theorem~\ref{tdvz} in the same way as Conjecture~\ref{cours} relates to Corollary~\ref{cdvz}. To state it, we need to recall the notion of  \textit{critical value}. 

We call ${\alpha \in \bar \Q\cup\{\infty\}}$ a \textit{critical value} (or a \textit{branch point}) of~$t$ if the rational function\footnote{Here and everywhere below we use the standard convention ${t-\infty=t^{-1}}$.} ${t-\alpha}$ has at least one multiple zero in $X(\bar \Q)$. It is well-known that any rational function ${t\in \bar \Q(X)}$ has at most finitely many critical values, and that~$t$ has at least~$2$ distinct critical values if it is of degree ${\nu\ge 2}$ (a consequence of the Riemann-Hurwitz formula). In particular, in this case~$t$ admits at least one \textit{finite} critical value.

\begin{conjecture}[Strong Diversity Conjecture (Schinzel)]
\label{cschin}
In the set-up of Conjecture~\ref{cours}, assume that either~$t$ has at least one  finite critical value not belonging to~$\Q$, or the field extension ${\bar \Q(X)/\bar\Q(t)}$ is not abelian. Then there exists a real number ${c>0}$ such that for every sufficiently large integer~$N$ the number field ${\Q(P_1,\ldots, P_N)}$ is of degree at least $e^{cN}$ over~$\Q$.
\end{conjecture}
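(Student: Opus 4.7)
The plan is to strengthen the discriminant-based counting underlying Theorem~\ref{tdvz}. In the Dvornicich--Zannier argument, the bound $e^{cN/\log N}$ arises because a typical $\Q(P_n)$ has discriminant polynomially bounded in $n$, hence ramifies at only $O(\log n)$ primes, so the total number of prime ramification events across $n \le N$ is $\Theta(N)$, each contributing a factor $\ge 2$ to the degree of the compositum. To reach $e^{cN}$ one must identify, for a positive-density subset of $n \le N$, a single \emph{fresh} Galois-theoretic signature in $\Q(P_n)$---independent of the signatures of all $\Q(P_m)$ with $m \ne n$---that multiplies the compositum degree by a uniform factor, so that $\Theta(N)$ such signatures yield the desired exponential bound.

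For the hypothesis that the geometric extension $\bar\Q(X)/\bar\Q(t)$ is non-abelian, let $G$ be its Galois group. Quantitative Hilbert irreducibility~\eqref{ehilb} (cf.~\cite[Ch.~9]{Se97}) gives that for all but $O(N^{1/2})$ integers $n \le N$ the arithmetic Galois closure of $\Q(P_n)/\Q$ has group $G$. Chebotarev, applied to this closure, selects for each such $n$ a small prime $p_n$ whose Frobenius lies in a fixed non-abelian conjugacy class of $G$; the contribution of $p_n$ to the Galois group of the compositum cannot be absorbed into any abelian quotient, and a second-moment estimate over the pairs $(n, p_n)$ is used to extract near-independence. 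In the case that some finite critical value $\alpha \notin \Q$, one instead exploits the local ramification of $X/\P^1$ over $\alpha$: for primes $p$ split in a suitable field of definition of $\alpha$ and integers $n$ sufficiently $p$-adically close to a conjugate of $\alpha$, the fibre $t^{-1}(n)$ acquires a prescribed decomposition type that forces $\Q(P_n)$ to contain a Kummer-type extension whose discriminant detects the full Galois orbit of $\alpha$; the non-rationality of $\alpha$ ensures that the resulting local data do not all collapse into $\Q^{\mathrm{ab}}$.

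The main obstacle, and the reason the techniques of the present paper yield only $N/(\log N)^{1-\eta}$ distinct fields rather than the conjectured $e^{cN}$ for the compositum, is what one might call \emph{cyclotomic contamination}: the danger that many of the fresh signatures in fact lie in a common abelian (hence, by Kronecker--Weber, cyclotomic) subfield of the compositum, so that they collectively contribute only a logarithmic factor to the degree. Controlling this requires a Galois-cohomological bound on the abelian part of $\Q(P_n)$ uniform in $n$, analogous to the Kummer--Chebotarev input appearing in Kowalski's large sieve framework. Such a bound is plausible but appears to lie substantially beyond the current state of the art, and I expect this step to be the central obstruction to a proof of Conjecture~\ref{cschin}.
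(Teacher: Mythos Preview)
The statement you are attempting to prove is Conjecture~\ref{cschin}, which the paper presents as an \emph{open problem}; the paper contains no proof of it. Indeed, the paper explicitly says that the Strong Diversity Conjecture is due to Schinzel and lists only partial cases (critical values of degree~$2$ or~$3$, large Galois groups with rational critical values) resolved by Dvornicich and Zannier. The main result of the paper, Theorem~\ref{tmain}, is a much weaker unconditional statement toward the \emph{Weak} Conjecture, and even that falls short of the exponential growth asserted in Conjecture~\ref{cschin}.

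Your proposal is not a proof either, and you say so yourself: your final paragraph identifies ``cyclotomic contamination'' as the central obstruction and states that overcoming it ``appears to lie substantially beyond the current state of the art.'' The earlier paragraphs are a heuristic outline rather than an argument: the appeal to ``a second-moment estimate over the pairs $(n,p_n)$'' to extract near-independence is asserted without any mechanism, and the claim that non-abelian Frobenius classes ``cannot be absorbed into any abelian quotient'' does not by itself prevent the compositum degree from growing sub-exponentially, since distinct non-abelian Galois extensions can still share large common subfields. Similarly, in the irrational-critical-value case, the passage from ``local data do not all collapse into $\Q^{\mathrm{ab}}$'' to an exponential lower bound on the compositum is precisely the missing step. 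So there is no proof to compare: the paper offers none, and your write-up is an honest acknowledgement that the conjecture remains open.
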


As Dvornicich and Zannier remark, the hypothesis in the Strong Diversity Conjecture  is necessary. Indeed, when all critical values belong to~$\Q$ and the field extension ${\bar \Q(X)/\bar\Q(t})$ is  abelian, it follows from Kummer's Theory that $\Q(X)$ is contained in the field of the form ${L(t, (t-\alpha_1)^{1/e_1}, \ldots, (t-\alpha_s)^{1/e_s})}$, where~$L$ is a number field,  ${\alpha_1, \ldots, \alpha_s}$ are rational numbers and ${e_1, \ldots, e_s}$ are positive integers. Clearly, in this case the degree of the number field generated by ${P_1, \ldots, P_N}$ cannot exceed $e^{cN/\log N}$ for some ${c>0}$. 

On the other hand,    Conjecture~\ref{cours} does hold~\cite{BL16} in the case excluded in  Conjecture~\ref{cschin}, when the finite critical values of~$t$ are all in~$\Q$, and  the field extension ${\bar \Q(X)/\bar\Q(t)}$ is abelian.  Hence, \textit{the Strong Conjecture  implies the Weak Conjecture}. 

Dvornicich and Zannier~\cite{DZ94,DZ95} obtain several results in favor of Schinzel's Conjecture. In particular, they show  that Conjecture~\ref{cschin} holds true in the following cases:

\begin{itemize}
\item
when $t$ admits a critical value of degree~$2$ or~$3$ over~$\Q$, see \cite[Theorem~2(b)]{DZ94}; 

\item
when all finite critical values are in~$\Q$ and the Galois group of the normal closure of $\bar\Q(X)$ over~$\bar\Q(t)$ is ``sufficiently large'' (for instance, symmetric or alternating), see~\cite{DZ95}. 

\end{itemize}

A result of Corvaja and Zannier \cite[Corollary~1]{CZ03} implies that, in the case when~$t$ has at least~$3$ zeros in $X(\bar \Q)$, a number field~$K$ of degree~$\nu$ or less may appear as $\Q(P_n)$ for at most $c(X,t,\nu)$ possible~$n$. In particular,  the Weak Conjecture holds in this case (but the Strong Conjecture remains open).

We mention also the work of Zannier~\cite{Za98}, who studies the following problem: given a number field~$K$, how many fields among ${\Q(P_1), \ldots, \Q(P_N)}$ contain~$K$? He proves that, under suitable assumptions, the number of such fields is $o(N^\eps)$ as ${N\to\infty}$  for any ${\eps>0}$.


In the present article we go a different way: instead of  imposing  additional restrictions on~$X$ and~$t$, we work in full generality, improving on Corollary~\ref{cdvz} quantitatively in the direction of  Conjecture~\ref{cours}. Here is our principal result. 


\begin{theorem}
\label{tmain}
In the set-up of Conjecture~\ref{cours}, there exists a positive real number ${\eta=\eta (\bfg,\nu)}$ 
such that for every sufficiently large integer~$N$,  among the number fields $\Q(P_1), \ldots, \Q(P_N)$ there are at least ${N/(\log N)^{1-\eta}}$ distinct. 
\end{theorem}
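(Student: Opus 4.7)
The plan is to first invoke the quantitative form of the Hilbert Irreducibility Theorem~\eqref{ehilb} to pass from $\{1,\ldots,N\}$ to its generic subset $\NN$, losing only an $O(N^{1/2})$ set on which $[\Q(P_n):\Q]$ drops below~$\nu$. For each degree-$\nu$ number field $K$, write ${S_K = \{n \in \NN : \Q(P_n) = K\}}$; the goal is a lower bound of the shape $|\{K\colon S_K\neq\emptyset\}|\geq N/(\log N)^{1-\eta}$.

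A uniform bound of the form $|S_K|\leq (\log N)^{1-\eta}$ is too much to hope for: already when ${X=\P^1}$ and ${t=s^2}$, one has $|S_{\Q(\sqrt m)}|\asymp \sqrt{N/m}$, which is as large as $N^{1/2}$, so a naive pigeonhole is doomed. The right quantity to control is the weighted tail
\begin{equation*}
\Sigma(M) \;:=\; \sum_{K\,:\,|S_K|>M}|S_K|,
\end{equation*}
and the key estimate I aim at is $\Sigma(M)\ll_{\bfg,\nu} N/M^{c}$ for some fixed $c=c(\bfg,\nu)>0$. Granted it, I choose ${M=(\log N)^{1-\eta}}$ with $\eta<c$; then $\Sigma(M)\leq N/2$ for $N$ large, so at least $N/2$ indices $n\in\NN$ lie in classes of size at most~$M$, and this balanced portion produces at least $N/(2M)=\Omega(N/(\log N)^{1-\eta})$ distinct fields. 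Absorbing the constant $1/2$ into a slightly smaller~$\eta$ then yields the theorem.

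The proof of the key estimate splits by the geometric type of $t\colon X\to\P^1$. When $\bfg\geq 1$, the affine curve ${X\setminus t^{-1}(\infty)}$ is hyperbolic and Siegel's theorem, together with Evertse-type uniformity of $S$-unit bounds, gives $|S_K|\leq C(X,t,\nu)$ independently of~$K$, so $\Sigma(M)=0$ for $N$ large. The same conclusion follows from the Corvaja--Zannier bound cited in the introduction whenever ${\bfg=0}$ and $t$ has at least three distinct zeros in $X(\bar\Q)$, and, by Siegel on $\P^1$ minus three points, whenever $t$ has at least three distinct poles. In all these cases the stronger assertion ${M\gg N}$ actually holds.

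The genuinely hard case is $\bfg=0$ with $t$ having at most two distinct zeros \emph{and} at most two distinct poles in $X(\bar\Q)$. After a $\Q$-rational change of coordinate (possibly over a quadratic extension to handle the case when the pair of critical points is not $\Q$-rational), $t$ reduces to Kummer shape, the model case being $t=s^\nu$: here $\Q(P_n)=\Q(n^{1/\nu})$ is determined by the $\nu$-th power-free part $m$ of $n$, one finds $|S_{\Q(m^{1/\nu})}|\asymp (N/m)^{1/\nu}$, and the direct summation
\begin{equation*}
\sum_{m\,\leq\, N/M^\nu}\Bigl(\tfrac{N}{m}\Bigr)^{1/\nu}\;\ll\;\frac{N}{M^{\nu-1}}
\end{equation*}
yields $\Sigma(M)\ll N/M^{\nu-1}$, which is amply sufficient. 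The main technical obstacle is to push this explicit Kummer-style count through in the full generality of the residual geometry—tracking the two zero/pole pairs, their (possibly quadratic) field of definition, and the interplay of the corresponding Kummer characters—so as to extract a uniform positive exponent $c=c(\bfg,\nu)$ which, together with the $O(N^{1/2})$ loss from step~\eqref{ehilb}, ultimately pins down the admissible value of~$\eta$.
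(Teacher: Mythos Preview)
Your proposal is not a proof: the case you yourself flag as ``genuinely hard'' --- $\bfg=0$ with at most two zeros and at most two poles --- is only illustrated by the single example $t=s^\nu$, and you explicitly leave the general argument as ``the main technical obstacle''. Worse, the reduction you announce is false. Having $|t^{-1}(0)|\le 2$ and $|t^{-1}(\infty)|\le 2$ on $\P^1$ does \emph{not} force~$t$ to be of Kummer shape; for instance $t(s)=(s-1)^2(s+1)^3/s^4$ has two zeros and two poles but is nowhere near $s^5$, and its field-counting problem does not reduce to counting $\nu$-th-power-free parts. What pins a degree-$\nu$ map on $\P^1$ down to $s^\nu$ (up to M\"obius) is having exactly two \emph{critical values}, which is a much stronger hypothesis than the one your case split leaves you with. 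So even granting the Corvaja--Zannier and Siegel inputs for the other cases, the residual geometry is a genuine family of maps, not a single Kummer model, and your tail estimate $\Sigma(M)\ll N/M^c$ is simply not established there.

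The paper proceeds along a completely different line and never splits by geometric type. It works with the separable polynomial $F(T)$ whose roots are the finite critical values of~$t$, and exploits the ramification dichotomy (Properties~\ref{iramdiv}--\ref{idivram}): sufficiently large primes ramifying in $\Q(P_n)$ are exactly those with $p\,\|\,F(n)$. Instead of bounding how many~$n$ share a given field, it \emph{manufactures} many~$n$ whose fields are forced to be pairwise distinct, by constructing a set $\MM_F(x)$ of square-free integers~$m$ (built from $k\approx\eps\delta\log\log x$ primes in~$\PP_F$, all large) of size $x/(\log x)^{1-\eps\delta+o(1)}$, showing each such~$m$ is ``primitive'' for some $n_m\le x$, and then --- this is the analytic heart, Sections~\ref{sgrege}--\ref{spml} --- proving that the map $m\mapsto n_m$ has fibres of bounded size for a positive proportion of~$m$. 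The whole argument is uniform in $(X,t)$ through the single parameter $d=\deg F\le 2\bfg+2\nu-2$, which is why an explicit $\eta\asymp (d\log d)^{-1}$ drops out. Your Diophantine strategy, by contrast, would (if it could be completed) give the full Weak Conjecture in the easy cases and nothing in the residual one; the paper's analytic route gives the weaker exponent $1-\eta$ but does so unconditionally.
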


The proof shows that ${\eta=10^{-6}\bigl((\bfg+\nu)\log(\bfg+\nu)\bigr)^{-1}}$ would do. 

\subsubsection*{Plan of the article}
In Section~\ref{snota} we introduce the notation and recall basic facts, to be used throughout the article. 

In Section~\ref{sargdz} we review the argument of Dvornicich and Zannier, and explain how it should be modified for our purposes. 

Sections~\ref{sset},~\ref{sgrege} and~\ref{spml} are the technical heart of the article. In Section~\ref{sset} and~\ref{sgrege} we introduce a  certain set of square-free numbers and study its properties. A key lemma used in Section~\ref{sgrege} is proved in Section~\ref{spml}. 

After all this preparatory work, the proof of Theorem~\ref{tmain} becomes quite transparent, see Section~\ref{sptmain}. 

\paragraph{Acknowledgments}
During the work on this article Yuri Bilu was partially supported by the University of Xiamen, and  by the binational research project MuDeRa, funded jointly by the French and the Austrian national science foundations ANR and FWF. 

We thank Jean Gillibert and Felipe Voloch  for useful discussions. We also thank the referees who carefully read the manuscript and detected several inaccuracies.

\section{Notation and Conventions}

\label{snota}

Unless the contrary is stated explicitly, everywhere in the article:
\begin{itemize}

\item
$n$ (with or without indexes) denotes a positive integer;

\item
$m$ (with or without indexes) denotes a square-free positive integer;

\item
$p$  (with or without indexes) denotes a prime number;

\item
$x$,~$y$,~$z$ denote positive real numbers.

\end{itemize}

We use the  notation 
$$
\pmax (n) =\max\{p: p\mid n\},\quad
\pmin (n) =\min\{p: p\mid n\}.
$$
As usual, we denote by $\omega(n)$ (respectively $\Omega(n)$) the number of prime divisors of~$n$ counted without (respectively, with) multiplicities.

For a separable polynomial ${F(T)\in \Z[T]}$ we denote:

\begin{itemize}

\item
$\Delta_F$ the discriminant of~$F$;

\item
$\PP_F$ the set of~$p$ for which $F(T)$ has a root $\bmod\, p$, and which do not divide~$\Delta_F$.

\item

$\MM_F$ the set of square-free integers composed of primes from $\PP_F$.
\end{itemize}

By the Chebotarev Density Theorem, the set~$\PP_F$ is of positive density among all the primes. We call it the \textit{Chebotarev density} of~$F$ and denote it by~$\delta_F$. Note that 
\begin{equation}
\label{edelffac}
\delta_F\ge \frac1{d},  
\end{equation}
where ${d=\deg F}$.


\section{The Argument of Dvornicich-Zannier}
\label{sargdz}

In this section we briefly review the beautiful ramification argument of Dvornicich and Zannier\footnote{In~\cite{DZ94} they trace it back to the work of Davenport et al~\cite{DLS64} from sixties.} and explain which changes are to be made therein to adapt it for proving Theorem~\ref{tmain}. 

Like in the introduction, in this section ``sufficiently large'' means ``exceeding some quantity depending on~$X$ and~$t$~''.

Let ${F(T)\in \Z[T]}$ be the primitive  separable  polynomial whose roots are exactly the finite critical values of~$t$, and let ${d=\deg F}$.  Using the Riemann-Hurwitz formula, one bounds the total number of critical values by ${2\bfg-2+2\nu}$, where  ${\bfg=\bfg(X)}$ is the genus of the curve~$X$. Hence 
\begin{equation}
\label{erihu}
d\le 2\bfg-2+2\nu.
\end{equation}
The basic properties of the polynomial $F(T)$ are summarized below. 

\renewcommand{\theenumi}{\textbf{\Alph{enumi}}}
\renewcommand{\labelenumi}{\theenumi}

\begin{enumerate}
\item
\label{iramdiv}
\textit{For sufficiently large~$p$, if~$p$ ramifies in $\Q(P)$ for some ${P\in t^{-1}( n )}$ then ${p\mid F( n )}$.}

\item
\label{idivram}
\textit{For sufficiently large~$p$, if ${p\,\|\,F( n )}$ then~$p$ ramifies in $\Q(P)$ for some ${P\in t^{-1}( n )}$. }

\item
\label{inuone}
\textit{For all~$p$  not dividing the discriminant $\Delta_F$ (which is  non-zero because~$F$ is a separable polynomial) the following holds: if for some~$ n $ we have ${p^2\mid F( n )}$ then ${p\,\|\,F( n +p)}$.}

\item
\label{idiv}
\textit{For every  ${p\in \PP_F}$  there exists  ${ n \le 2p}$ such that ${p\,\|\,F( n )}$.}

\item
\label{ifew}
\textit{When~$ n $ is sufficiently large, $F( n )$ has at most~$d$ prime divisors ${p\ge  n /4}$. }

\setcounter{jump}{\value{enumi}}

\end{enumerate}


Here properties~\ref{iramdiv} and~\ref{idivram} are rather standard statements linking geometric and arithmetical ramification, see \cite[Theorem~7.8]{Bi16}. 

Property~\ref{inuone} is very easy: write 
$$
F(n+p)\equiv F(n)+F'(n)p\bmod p^2 . 
$$
If~$p^2$ divides both $F(n)$ and  ${F(n+p)}$ then ${p\mid F'(n)}$, which means that~$p$ must divide the discriminant $\Delta_F$, a contradiction. 

Property~\ref{idiv} follows from~\ref{inuone}, and property~\ref{ifew} is obvious: if there are ${d+1}$ such primes, then ${(n/4)^{d+1}\le |F(n)|}$, which is impossible for large~$n$. 

One may also note that our definition of the polynomial $F(T)$ is relevant only for properties~\ref{iramdiv} and~\ref{idivram}; the other properties hold for any separable polynomial ${F(T)\in \Z[T]}$. 

\bigskip

Now we are ready to sketch the proof of Theorem~\ref{tdvz}. Denote by~$K_ n $ the number field ${\Q(t^{-1}( n ))}$, generated by all the points in the fiber of~$ n $, and by~$L_n$ the compositum of the fields ${K_1, \ldots, K_n}$. Then~$K_n$ is a Galois extension of~$\Q$ containing $\Q(P_n)$, and~$L_n$ is a Galois extension of~$\Q$ containing $\Q(P_1, \ldots, P_n)$.

We call~$p$  \textit{primitive} for some~$ n $ if~$p$ ramifies in~$K_ n $,  but not in~$L_{n-1}$. The observations above have the following two consequences.

\begin{enumerate}
\setcounter{enumi}{\value{jump}}

\item
\label{ipn}
\textit{Every sufficiently large ${p\in \PP_F}$ is primitive for some ${n\le 2p}$.}

\item
\label{inp}
\textit{Every sufficiently large~$n$ has at most~$d$ primitive ${p\in [n/4, n]}$. }

\end{enumerate}

Here~\ref{ipn} follows from~\ref{idivram} and~\ref{idiv}, and~\ref{inp} follows from~\ref{iramdiv}  and~\ref{ifew}.

For a given~$N$ let~$S_N$ be the set of~$n$ with the  property
$$
\text{$n$ has a primitive ${p\in [N/4, N/2]}$}. 
$$
It follows from~\ref{ipn} that ${S_N\subset [1,N]}$, and from~\ref{inp}, the Chebotarev Theorem and the Prime Number Theorem that, for sufficiently large~$N$
$$
|S_N|\ge \frac1d \bigl|\PP_F\cap[N/4,N/2]\bigr|\ge \frac{\delta_F}{5d} \frac N{\log N}.
$$
Furthermore, let $S'_N$ be the subset of~$S_N$ consisting of~$n$ such that the fiber $t^{-1}(n)$ is irreducible. The quantitative Hilbert Irreducibility Theorem~\ref{ehilb} implies that, for  large~$N$ we have  ${|S_N\smallsetminus S'_N|\le c(\nu)N^{1/2}}$, which means that, for large~$N$,
$$
|S'_N|\ge \frac{\delta_F}{6d}\frac N{\log N}.
$$

It is clear that if~$n$ admits a primitive~$p$ then~$K_n$ is not contained in $L_{n-1}$.   If, in addition to this, the fiber $t^{-1}(n)$ is irreducible, then $\Q(P_n)$ is not contained in ${\Q(P_1, \ldots, P_{n-1})}$, because in this case~$K_n$ is the Galois closure (over~$\Q$) of $\Q(P_n)$. It follows that
$$
[\Q(P_1, \ldots, P_N):\Q]\ge 2^{|S'_N|},
$$
which, in view of~\eqref{edelffac} and~\eqref{erihu}, proves Theorem~\ref{tdvz}.

\bigskip

The (already mentioned in the Introduction) example of the curve ${u=t^2}$ suggests that we can  make progress towards Conjecture~\ref{cours}  replacing prime numbers in the argument above by (suitably chosen) square-free numbers. This means that we have to obtain analogues of properties~\ref{ipn} and~\ref{inp} above with primes replaced by square-free numbers.


Let~$m$ be a square-free integer, and~$n$ an arbitrary integer. We say that ${m\,\|\,n}$ if ${m\mid n}$ and ${\gcd(m,n/m)=1}$. 

A ``square-free analogue'' of~\ref{ipn}  is relatively easy: one uses the following lemma, which generalizes property~\ref{inuone}.

\begin{lemma}
\label{lnuone}
Let~$m$ be a square free positive integer, coprime with $\Delta_F$ and such that  ${\pmin(m)>\omega(m)}$. Assume that for some~$n$ we have ${m\mid F(n)}$. Then there exists ${\ell\in \{0,1,\ldots, \omega(m)\}}$ such that ${m\,\|\,F(n+\ell m)}$. 
\end{lemma}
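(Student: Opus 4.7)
The plan is to reduce the statement to a one-prime-at-a-time condition by means of a Taylor expansion, and then finish with a pigeonhole argument that uses precisely the hypothesis $\pmin(m)>\omega(m)$.

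First I would write $m=p_1\cdots p_k$ with $k=\omega(m)$, and for each $i$ factor $m=p_im_i'$ with ${\gcd(m_i',p_i)=1}$. Expanding $F(T)$ by Taylor at $T=n$ one has, for some polynomial ${H\in\Z[T,U]}$,
\begin{equation*}
F(n+\ell m)=F(n)+\ell m F'(n)+(\ell m)^2 H(n,\ell m).
\end{equation*}
In particular $F(n+\ell m)\equiv F(n)\pmod m$, so the assumption $m\mid F(n)$ gives $m\mid F(n+\ell m)$ for every~$\ell$. Thus $m\,\|\,F(n+\ell m)$ will hold as soon as $p_i^2\nmid F(n+\ell m)$ for each $i=1,\dots,k$.

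Next I would analyze the condition $p_i^2\nmid F(n+\ell m)$ modulo $p_i^2$. Because $p_i\mid m$ but $p_i\nmid m_i'$, the expansion above reduces mod $p_i^2$ to
\begin{equation*}
F(n+\ell m)\equiv F(n)+\ell p_i m_i' F'(n)\pmod{p_i^2}.
\end{equation*}
Here I use the key input that $p_i\nmid\Delta_F$: since $F$ has no multiple roots mod $p_i$ and $p_i\mid F(n)$, necessarily $p_i\nmid F'(n)$, and hence $p_i\nmid m_i'F'(n)$. Dividing the congruence by $p_i$ (legitimate since $p_i\mid F(n)$), the condition $p_i^2\mid F(n+\ell m)$ becomes a single linear congruence
\begin{equation*}
\ell\, m_i' F'(n)\equiv -\tfrac{F(n)}{p_i}\pmod{p_i},
\end{equation*}
which cuts out exactly one residue class of $\ell$ modulo $p_i$; call it $\ell_i^{(0)}$. (In the subcase $p_i^2\mid F(n)$ one just recovers $\ell_i^{(0)}\equiv 0$.)

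Finally I would apply pigeonhole to the $k+1$ candidates $\ell\in\{0,1,\dots,k\}$. Since $\pmin(m)>k$, every $p_i$ satisfies $p_i\ge k+1$, so the $k+1$ candidates are pairwise distinct mod $p_i$, and at most one of them can coincide with $\ell_i^{(0)}\pmod{p_i}$. Thus at most $k$ of the $k+1$ candidates are "bad," leaving at least one $\ell\in\{0,1,\dots,\omega(m)\}$ for which $p_i\,\|\,F(n+\ell m)$ holds simultaneously for every $i$, which is precisely $m\,\|\,F(n+\ell m)$. The only step that really requires care is the local analysis at each $p_i$, where the interplay between $m=p_i m_i'$ and the non-vanishing of $F'(n)\bmod p_i$ must be handled cleanly; the counting step is then automatic from the hypothesis on $\pmin(m)$.
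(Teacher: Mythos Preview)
Your proof is correct and follows essentially the same approach as the paper: both arguments rest on the Taylor expansion $F(n+\ell m)\equiv F(n)+\ell m F'(n)\pmod{p^2}$, the fact that $p\mid F(n)$ and $p\nmid\Delta_F$ force $p\nmid F'(n)$, and a pigeonhole count using $\pmin(m)>\omega(m)$. The only cosmetic difference is that you argue directly (showing each prime excludes at most one of the $k+1$ candidates), whereas the paper argues by contradiction (if every candidate were bad, two would share a bad prime and subtracting gives $p\mid F'(n+\ell_1 m)$, hence $p\mid\Delta_F$).
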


\begin{proof}
Assume the contrary: for every ${\ell\in \{0,1,\ldots, \omega(m)\}}$ there exists ${p\mid m}$ such that ${p^2\mid f(n+\ell m)}$. By the box principle some~$p$ would occur for two distinct values~$\ell_1$ and~$\ell_2$; we will assume that ${0\le\ell_1<\ell_2\le \omega(m)}$. We obtain 
\begin{align*}
0&\equiv F(n+\ell_2 m)&&\bmod p^2\\
&\equiv F(n+\ell_1 m)+F'(n+\ell_1 m)(\ell_2-\ell_1)m&&\bmod p^2\\
&\equiv F'(n+\ell_1 m)(\ell_2-\ell_1)m &&\bmod p^2. 
\end{align*}
We have ${p\,\|\,m}$ and, since 
$$
0<\ell_2-\ell_1\le \omega(m)<\pmin(m)\le p,
$$ 
we have ${p\nmid (\ell_2-\ell_1)}$. Hence ${p\mid F'(n+\ell_1 m)}$, which implies that ${p\mid \Delta_F}$, a contradiction.\qed
\end{proof}

\bigskip

Recall that the set~$\PP_F$ consists of primes~$p$ not dividing the discriminant $\Delta_F$ and such that~$F$ has a root $\bmod\, p$, and that~$\MM_F$ is the set of square-free numbers composed of primes from~$\PP_F$. The following consequence is immediate. 

\begin{corollary}
\label{cdiv}
Let ${m\in \MM_F}$ have the property  ${\pmin(m)>\omega(m)}$. Then there exists  ${n\le m(\omega(m)+1)}$ such that ${m\,\|\,f(n)}$. 
\end{corollary}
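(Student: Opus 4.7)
The plan is to deduce Corollary~\ref{cdiv} as a direct consequence of Lemma~\ref{lnuone} combined with the Chinese Remainder Theorem, using the defining property of~$\PP_F$.

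First I would establish a ``seed'' integer $n_0$ with $m \mid F(n_0)$ and $1 \le n_0 \le m$. For each prime $p \mid m$, we have $p \in \PP_F$, which by definition means that $F$ has a root $a_p$ modulo~$p$. Since the primes dividing the square-free integer~$m$ are pairwise coprime, the Chinese Remainder Theorem produces a residue class modulo~$m$ containing an integer $n_0 \in \{1, 2, \ldots, m\}$ satisfying $n_0 \equiv a_p \pmod{p}$ for every prime $p \mid m$. For this~$n_0$ we have $F(n_0) \equiv F(a_p) \equiv 0 \pmod p$ for every $p \mid m$, hence $m \mid F(n_0)$.

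Next I would upgrade divisibility to exact divisibility via Lemma~\ref{lnuone}. By hypothesis $m \in \MM_F$ is composed of primes from~$\PP_F$, so in particular $m$ is coprime with $\Delta_F$ (primes in~$\PP_F$ do not divide $\Delta_F$); and the hypothesis $\pmin(m) > \omega(m)$ is precisely what Lemma~\ref{lnuone} requires. Applying that lemma to~$n_0$, we obtain $\ell \in \{0, 1, \ldots, \omega(m)\}$ such that $m \,\|\, F(n_0 + \ell m)$. Setting $n = n_0 + \ell m$ gives $m \,\|\, F(n)$ with
\[
n \le m + \omega(m)\, m = m(\omega(m)+1),
\]
as required.

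There is essentially no obstacle here: the only thing to check is that the hypotheses of Lemma~\ref{lnuone} are inherited from those of the corollary, which is immediate from the definitions of $\PP_F$ and $\MM_F$. The only mild subtlety is ensuring $n \ge 1$, which is why $n_0$ is chosen in $\{1,\ldots,m\}$ rather than $\{0,\ldots,m-1\}$; since the bound $m(\omega(m)+1)$ leaves one full ``block'' of slack beyond $\omega(m)\cdot m$, this shift causes no loss.
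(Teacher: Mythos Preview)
Your proof is correct and follows essentially the same approach as the paper: use the Chinese Remainder Theorem to produce ${n_0\le m}$ with ${m\mid F(n_0)}$, then apply Lemma~\ref{lnuone} to obtain ${n=n_0+\ell m\le m(\omega(m)+1)}$ with ${m\,\|\,F(n)}$. You have simply written out in more detail the verification that the hypotheses of Lemma~\ref{lnuone} are met (coprimality with~$\Delta_F$ coming from the definition of~$\PP_F$) and taken care of the positivity of~$n$, neither of which the paper makes explicit.
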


\begin{proof}
The Chinese Remainder Theorem implies that for any ${m\in \MM_F}$ there exists ${n\le m}$ such that ${m\mid F(n)}$. Now use Lemma~\ref{lnuone}.\qed
\end{proof}

\bigskip

Call ${m\in \MM_F}$ \textit{primitive} for~$n$ if every ${p\mid m}$ ramifies in~$K_n$, and for every ${n'<n}$ some ${p\mid m}$ does not ramify in $K_{n'}$.  Combining Corollary~\ref{cdiv} with property~\ref{iramdiv}, we obtain a quite satisfactory generalization of property~\ref{ipn} to square-free numbers. 

{\sloppy

\begin{corollary}
\label{cprim}
Let~$m$ be like in Corollary~\ref{cdiv}. Then~$m$ is primitive for some ${n\le m(\omega(m)+1)}$. 
\end{corollary}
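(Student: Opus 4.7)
The plan is to deduce this corollary directly by combining Corollary~\ref{cdiv} with the ``divisibility implies ramification'' property~\ref{idivram}. Applying Corollary~\ref{cdiv}, one obtains an integer ${n_0\le m(\omega(m)+1)}$ with ${m\,\|\,F(n_0)}$. Because $m$ is square-free, this divisibility upgrades at the level of each individual prime divisor: for every ${p\mid m}$ we have ${p\,\|\,F(n_0)}$.

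Next, assuming the primes dividing $m$ are large enough for property~\ref{idivram} to apply, each such $p$ ramifies in $\Q(P)$ for some ${P\in t^{-1}(n_0)}$, and therefore ramifies in the Galois closure ${K_{n_0}=\Q(t^{-1}(n_0))}$. Hence every prime divisor of $m$ ramifies in $K_{n_0}$.

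To finish, define $n$ to be the least positive integer for which every ${p\mid m}$ ramifies in $K_n$. Such an $n$ exists because $n_0$ has this property, and clearly ${n\le n_0\le m(\omega(m)+1)}$. By the minimality of $n$, for every ${n'<n}$ at least one ${p\mid m}$ fails to ramify in $K_{n'}$, which is precisely the condition that $m$ be primitive for $n$.

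The only subtle point is the ``sufficiently large $p$'' clause in property~\ref{idivram}: we need \emph{every} prime dividing $m$ to clear the threshold, so we must implicitly restrict to $m$ whose smallest prime factor exceeds a fixed bound depending only on $X$ and $t$. This is consistent with the paper's convention that ``sufficiently large'' may depend on $X,t$, and is the only (mild) obstacle in the argument; once it is granted, the three steps above fit together with no further work.
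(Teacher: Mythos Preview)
Your proof is correct and follows exactly the route the paper intends: Corollary~\ref{cdiv} produces an $n_0\le m(\omega(m)+1)$ with $m\,\|\,F(n_0)$, property~\ref{idivram} turns this into ramification of every $p\mid m$ in $K_{n_0}$, and passing to the least such~$n$ yields primitivity. (The paper's one-line justification cites property~\ref{iramdiv}, but---as the parallel with~\ref{ipn} makes clear---the property actually needed is~\ref{idivram}, which is precisely what you invoke; your caveat about $\pmin(m)$ exceeding the implicit threshold in~\ref{idivram} is also appropriate.)
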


}

Another task to accomplish is  extending to square-free numbers property~\ref{inp}. This is much more intricate, see Sections~\ref{sset},~\ref{sgrege} and~\ref{spml}.

\section{A Special Set of Square-Free Numbers}
\label{sset}
In this section we fix a separable polynomial ${F(T)\in \Z[T]}$ of degree~$d$ and a real number~$\eps$ satisfying  ${0<\eps\le1/2}$. ``Sufficiently large'' will always mean ``exceeding a certain quantity depending on~$F$ and~$\eps$~'', and the constants implied by the ``~$O(\cdot)$~'' and ``~$\ll$~'' notation depend on~$F$ and~$\eps$ unless the contrary is stated explicitly.  

Recall that $\PP_F$ denotes the set of primes~$p$ not dividing the discriminant~$\Delta_F$ and such that~$F$ has a root $\bmod\, p$, and~$\MM_F$ denotes the set of the square-free numbers composed of primes from~$\PP_F$. Recall also that we denote by ${\delta=\delta_F}$ the density of~$\PP_F$. We have, as ${x\to\infty}$,
$$
\bigl|\PP_F\cap[0,x]\bigr|\sim \delta\frac x{\log x}, \qquad \bigl|\MM_F\cap[0,x]\bigr|\sim \gamma\frac x{(\log x)^{1-\delta}}
$$
where ${\gamma=\gamma(F)}$ is a certain positive real number. 

Recall that, unless the contrary is stated explicitly, the letter~$n$ always denotes a positive integer,~$m$ a square-free positive integer and~$p$ a prime number.

We fix a big positive real number~$x$ and set 
$$
\kappa=\log\log x, \qquad k=\lfloor\eps\delta\log\log x\rfloor+1, \qquad y=e^{(\log x)^{1-\eps}}. 
$$
Furthermore, we denote by $\MM_F(x)$ the set of ${m\in \MM_F}$  satisfying
$$
\frac x{2\kappa}\le m\le \frac x\kappa, \quad \pmax(m)\ge x^{9/10}, \quad \pmin(m)\ge y, \quad \omega(m)=k+1. 
$$

\begin{proposition}
\label{passy}
We have
${|\MM_F(x)|=x(\log x)^{-1+\varepsilon \delta+o(1)}}$
as ${x\to\infty}$. 
\end{proposition}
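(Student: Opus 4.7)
The plan is to decompose each $m \in \MM_F(x)$ as $m = pu$ with $p = \pmax(m)$ and $u = m/p$. Then $u$ is squarefree with $\omega(u) = k$, every prime factor of $u$ lies in $\PP_F$ and is at least $y$, and the size constraints on $m$ translate into $p \in \PP_F$ with $\max(x^{9/10}, x/(2\kappa u)) \le p \le x/(\kappa u)$. Since $p \ge x^{9/10}$ and $pu \le x/\kappa$ force $u \le x^{1/10}/\kappa$, every prime factor of $u$ lies far below $x^{9/10}$, so the condition $p > \pmax(u)$ is automatic; thus counting $\MM_F(x)$ reduces to counting such pairs $(p,u)$.

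For each admissible $u$, the prime number theorem together with the Chebotarev density theorem yields
\[
\#\{p \in \PP_F : x/(2\kappa u) \le p \le x/(\kappa u)\} = \frac{\delta\, x}{2\kappa u \log(x/(\kappa u))} (1 + o(1)),
\]
with the $o(1)$ uniform in $u \le x^{1/10}/\kappa$, since $\log(x/(\kappa u)) = (1+o(1))\log x$ in this range. Summing over $u$ gives
\[
|\MM_F(x)| = \frac{\delta\, x}{2\kappa \log x}(1+o(1)) \sum_u \frac{1}{u},
\]
where $u$ runs over squarefree integers with $\omega(u) = k$ and prime factors in $\PP_F \cap [y,\, x^{1/10}/\kappa]$. (For the lower bound one may further restrict to $u \le x^{1/100}$ by requiring all prime factors in $\PP_F \cap [y,\, x^{1/(100k)}]$, which automatically yields $p \ge x^{99/100}$ and changes the asymptotics only by a factor of $(\log x)^{o(1)}$.)

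The central task is to evaluate $\sum_u 1/u$ to precision $(\log x)^{o(1)}$. Setting $S_j = \sum_{p \in \PP_F \cap [y,U]} 1/p^j$, inclusion--exclusion on ordered $k$-tuples of primes gives the identity
\[
S_1^k = k!\sum_u \frac{1}{u} + R, \qquad 0 \le R \le \binom{k}{2} S_2 S_1^{k-2}.
\]
Since $p \ge y$ throughout, $S_2 \le \sum_{n \ge y} 1/n^2 \ll 1/y = o(1)$, so the relative size of $R$ is $o(1)$. Mertens' theorem for the Chebotarev set $\PP_F$, obtained by partial summation from $\pi_{\PP_F}(t) \sim \delta t/\log t$, gives
\[
S_1 = \delta(\log\log U - \log\log y) + O(1) = \eps\delta\kappa + O(\log\log\log x),
\]
for either choice of $U$ above, using $\log\log y = (1-\eps)\kappa$. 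Since $k = \eps\delta\kappa + O(1)$ matches $S_1$ to leading order, $S_1/k = 1 + o(1)$, and Stirling's formula yields
\[
\frac{S_1^k}{k!} = \exp\bigl(k + k\log(S_1/k) + O(\log k)\bigr) = (\log x)^{\eps\delta + o(1)}.
\]
Combining these estimates yields $|\MM_F(x)| = x(\log x)^{-1+\eps\delta + o(1)}$.

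The main obstacle is uniformity as $k$ grows with $x$: a multiplicative error of the form $C^k$ anywhere in $S_1$ would contribute a factor $(\log x)^{O(1)}$ and destroy the exponent. Three safeguards make the bookkeeping work: the match $k \sim S_1 \sim \eps\delta\kappa$ built into the definitions, which keeps $k\log(S_1/k)$ bounded in Stirling's formula; the lower cutoff $\pmin(m) \ge y$, which makes $S_2 = o(1)$ and renders the repeated-prime correction $R$ negligible; and the lower cutoff $\pmax(m) \ge x^{9/10}$, which keeps $\log(x/(\kappa u))$ asymptotic to $\log x$ throughout the admissible range of $u$.
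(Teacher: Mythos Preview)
Your argument is correct and follows essentially the same route as the paper: decompose $m=Pu$ with $P=\pmax(m)$, count admissible $P$ for fixed $u$ via Chebotarev and the prime number theorem, and then estimate $\sum_u 1/u$ by comparing with $S_1^k/k!$ using Stirling. The paper's presentation differs only cosmetically---it treats the upper and lower bounds separately (using the crude inequality $\sum_u 1/u\le S_1^k/k!$ above and an explicit subtraction of non-squarefree tuples below), whereas you package both directions into the single identity $S_1^k=k!\sum_u 1/u+R$ with $R\ll k^2S_2S_1^{k-2}$; the restriction of $u$ to have all prime factors below $x^{1/(100k)}$ for the lower bound plays the same role as the paper's auxiliary set $\MM''_F(x)$ with primes in $[y,x^{1/(11\log\log x)}]$. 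One small slip: $k\log(S_1/k)$ is $O(\log\log\log x)$ rather than $O(1)$, but this is harmless since $e^{O(\log\log\log x)}=(\log x)^{o(1)}$.
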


\begin{proof}

If ${m\in \MM_F(x)}$, then ${m=Pm_1}$, where ${P=\pmax(m)\ge x^{9/10}}$. 
We denote by $\MM_F'(x)$ be the set of such~$m_1$'s. Then ${\MM'_F(x)\subset \MM_F}$ and for every
${m_1\in \MM_F'(x)}$  we have
\begin{equation}
\label{emone}
m_1\le x^{1/10}, \quad\pmin(m_1)\ge y, \quad \omega(m_1)=k.
\end{equation}

Let us count suitable~$P$ for a fixed $m_1$.  These are exactly the primes ${P\in \PP_F}$ from the interval ${[x/(2\kappa m_1), x/(\kappa m_1)]}$ satisfying ${P\ge x^{9/10}}$. The following observations are crucial.  
\begin{itemize}
\item
Since ${m_1\le x^{1/10}}$, we have ${x/(\kappa m_1)>x^{4/5}}$ for sufficiently large~$x$.
Hence, for a fixed $m_1$, 
the number of suitable~$P$ is bounded from above by 
$$
\pi\left(\frac x{\kappa m_1}\right)\ll \frac{x}{\kappa m_1 \log x}.
$$
\item
If ${m_1\le x^{1/10}/2\kappa}$ then every prime ${P\in \PP_F\cap[x/(2\kappa m_1), x/(\kappa m_1)]}$ is suitable. Hence,    for a fixed ${m_1\le x^{1/10}/2\kappa}$, 
the number of suitable~$P$ is bounded from below by 
$$
\pi_F\left(\frac x{\kappa m_1}\right)-\pi_F\left(\frac x{2\kappa m_1}\right)=\left(\frac\delta2+o(1)\right) \frac{x}{\kappa m_1 \log(x/(\kappa m_1))}\gg \frac{x}{\kappa m_1 \log x}.
$$
\end{itemize}
Here, $\pi_F(T)$ counts the number of primes in ${\PP_F\cap[0,T]}$. 

Summing up over ${m_1\in \MM_F'(x)}$, we obtain
\begin{equation}
\label{eboth}
\frac{x}{\kappa \log x}\sum_{\substack{m_1\in \MM_F'(x)\\  m_1\le x^{1/10}/2\kappa}} \frac{1}{m_1}\ll |\MM_F(x)|\ll\frac{x}{\kappa \log x}\sum_{m_1\in {\MM_F'(x)}} \frac{1}{m_1}.
\end{equation}
We will show that  the  the right-hand side of~\eqref{eboth} is bounded by ${x(\log x)^{-1+\varepsilon \delta+o(1)}}$ from above, and the left-hand side from below.  

\bigskip

The \textbf{upper bound} is easy:
\begin{align}
\sum_{m_1\in {\MM}_F'(x)} \frac{1}{m_1} & \le  \frac{1}{k!} \left(\sum_{\substack{y\le p\le x\\ p\in {\mathcal P}_F}} \frac{1}{p}\right)^k\nonumber\\
& \ll  \frac{1}{(k/e)^k} \left((\delta+o(1))\log\log x-(\delta+o(1))\log\log y\right)^k\nonumber\\
& \ll \left(\frac{(e+o(1))\eps\delta\log\log x}{k}\right)^k\nonumber\\
\label{eq:MFprime}
& =  (\log x)^{\varepsilon \delta+o(1)}
\end{align}
as ${x\to\infty}$. Hence,
${|\MM_F(x)|\le x(\log x)^{-1+\varepsilon \delta+o(1)}}$
as $x\to\infty$. 

\bigskip

For the \textbf{lower bound}, set 
${z=x^{(1/11\log\log x)}}$
and ${\II=[y, z]}$ and consider the following two sets:
\begin{itemize}
\item
the set $\MM_F''(x)$ of square-free numbers~$m_1$ with prime divisors in ${\PP_F\cap \II}$ and with ${\omega(m_1)=k}$;

\item
the set $\NN''_F(x)$ of \textit{non-square-free} numbers~$n_1$ with prime divisors in ${\PP_F\cap \II}$ and with ${\Omega(n_1)=k}$. 
\end{itemize}
Clearly, every ${m_1\in \MM''_F(x)}$ satisfies
$$
m_1\le x^{k/(11\log\log x)}<x^{1/11} \le \frac{x^{1/10}}{2\kappa}
$$
for large $x$. Hence the sum in the left-hand side of~\eqref{eboth} can be bounded as follows:
\begin{align}
\sum_{\substack{m_1\in \MM_F'(x)\\  m_1\le x^{1/10}/2\kappa}} \frac{1}{m_1}&\ge \sum_{m_1\in {\mathcal M}_F''(x)} \frac{1}{m_1}\nonumber\\
\label{edifference}
&\ge \frac{1}{k!} \left(\sum_{p\in {\mathcal P}_F\cap [y, z]} \frac{1}{p}\right)^k-\sum_{n_1\in \NN''_F(x)} \frac{1}{n_1}. 
\end{align}
We need to esimate the first sum in~\eqref{edifference} from below and the second sum from above.

For the first sum we use the same argument as before and get
\begin{align*}
\frac{1}{k!} \left(\sum_{p\in {\mathcal P}_F\cap [y, z]} \frac{1}{p}\right)^k & \gg  \frac{1}{\sqrt{k}} \frac{1}{(k/e)^k} \left((\delta+o(1))\log \log z-(\delta+o(1))\log\log y\right)^k\\
& \gg   
\left(\frac{(e+o(1))\eps\delta\log\log x}{k}\right)^k\\
& =  (\log x)^{\varepsilon \delta+o(1)}.
\end{align*}
Now let us estimate the second sum in~\eqref{edifference}. Note that  every ${n_1\in \NN''_F(x)}$  satisfies ${n_1\le z^k<x}$ and is divisible by the square of a prime ${p\ge y}$. Hence, ${n_1=p^2 n_2}$ for some ${n_2\le x}$. It follows that
$$
\sum_{n_1\in \NN''_F(x)} \frac{1}{n_1}\le \left(\sum_{p\ge y} \frac{1}{p^2}\right)\left(\sum_{n_2\le x} \frac{1}{n_2}\right)\ll \frac{\log x}{y}=o(1)
$$
as ${x\to\infty}$. 

Putting all the estimates together, we conclude that 
$$
|\MM_F(x)|\gg \frac{x(\log x)^{\varepsilon \delta+o(1)}}{\log x\log\log x} = \frac{x}{(\log x)^{1-\varepsilon \delta+o(1)}}
$$
as $x\to\infty$, which is what we wanted. \qed
\end{proof}

\section{Greedy and Generous Square-free Numbers}
\label{sgrege}

\textit{We retain the notation and set-up of  Section~\ref{sset}. }

\bigskip

As we have already remarked in Section~\ref{sargdz}, the Chinese Remainder Theorem implies that for any ${m\in \MM_F}$ there exists a positive integer~$n$ such that ${m\mid F(n)}$. Moreover, if ${m\in \MM_F(x)}$ then we can choose such~$n$ satisfying ${n\le x}$.  Of course, there can be several~$n$ with this property; pick one of them and call it~$n_m$. 

Thus,  for every ${m\in \MM_F(x)}$ we pick ${n_m\le x}$ such that ${m\mid f(n_m)}$; we fix this choice of the numbers~$n_m$ until the end of this section. 

It might happen that ${n_m=n_{m'}}$ for distinct ${m,m'\in \MM_F(x)}$. It turns out, however, that, with a suitable choice of our parameter~$\eps$, 
the repetitions are ``not too frequent''. 

Call ${m\in \MM_F(x)}$ \textit{generous} if it shares its~$n_m$ with at least $6d$ other elements of $\MM_F(x)$, and \textit{greedy} otherwise.


\begin{proposition}
\label{psect}
Specify
\begin{equation}
\label{eeps}
\eps=\frac1{10^3\log(2d)}. 
\end{equation}
Then for sufficiently large~$x$ at least half of the elements of the set $\MM_F(x)$ are greedy. In particular, 
$$
\bigl|\{n_m: m\in \MM_F(x)\}\bigr|\ge \frac1{12d}|\MM_F(x)|.
$$
\end{proposition}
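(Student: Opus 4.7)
The plan is to bound the number of generous elements by a second-moment argument on a natural counting function. For $n \le x$, introduce
\[
T(n) = |\{m \in \MM_F(x) : m \mid F(n)\}|, \qquad Q_n = |\{p \in \PP_F : p \ge y,\ p \mid F(n)\}|.
\]
Any $m \in \MM_F(x)$ dividing $F(n)$ factors as $m = q_0 m'$ with $q_0 = \pmax(m) \ge x^{9/10}$ and $m'$ a product of $k$ primes in $\PP_F \cap [y, q_0)$ dividing $F(n)$. Since $F(n) \le C x^d$ has at most $\lceil 10d/9 \rceil \le 2d$ prime divisors exceeding $x^{9/10}$, I get the pointwise estimate
\[
T(n) \le 2d \binom{Q_n}{k}.
\]

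Next I reduce the proposition to a second-moment bound on $T(n)$. For any fixed choice of $\{n_m\}$, a generous $m$ forces $T(n_m) \ge s_{n_m} \ge 6d+1$, so the generous set is contained in $\{m \in \MM_F(x) : \exists\, n \le x,\ m \mid F(n),\ T(n) \ge 6d+1\}$. A union bound followed by Markov's inequality gives
\[
|\{m\ \mathrm{generous}\}| \;\le\; \sum_{\substack{n \le x \\ T(n) \ge 6d+1}} T(n) \;\le\; \frac{1}{6d+1}\sum_{n \le x} T(n)^2,
\]
so it suffices to prove $\sum_n T(n)^2 \le \tfrac{6d+1}{2}\,|\MM_F(x)|$.

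The heart of the argument is evaluating this second moment via the expansion
\[
\sum_n T(n)^2 = \sum_{m_1, m_2 \in \MM_F(x)} \#\{n \le x : \mathrm{lcm}(m_1, m_2) \mid F(n)\}.
\]
The diagonal ($m_1 = m_2$) reduces to $\sum_n T(n) = \sum_m \rho_F(m)(x/m + O(1))$, controlled by Selberg--Delange/Mertens-type estimates using $\sum_{p \in \PP_F,\, p \ge y} \rho_F(p)/p = \eps\log\log x + O(1)$, together with the narrow range $m \in [x/(2\kappa), x/\kappa]$ and the constraint $\omega(m) = k+1$. The crucial observation for the off-diagonal ($m_1 \ne m_2$) is that
\[
\mathrm{lcm}(m_1, m_2) \;\ge\; m_1 \cdot \pmin(m_2) \;\ge\; \frac{xy}{2\kappa} \;\gg\; x,
\]
so that $\#\{n \le x : \mathrm{lcm}(m_1, m_2) \mid F(n)\} \le \rho_F(\mathrm{lcm}(m_1, m_2))$. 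Writing $m_i = g a_i$ with $g = \gcd(m_1, m_2)$, $\omega(g) = j \in \{0,\ldots, k\}$, and $\gcd(a_1, a_2) = 1$, the multiplicativity $\rho_F(\mathrm{lcm}) = \rho_F(g)\rho_F(a_1)\rho_F(a_2)$ factors the off-diagonal sum, and one sums over $j$ using Mertens sums over $\PP_F \cap [y, x]$ weighted by $\rho_F$.

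The calibration $\eps = 1/(10^3\log(2d))$ is designed precisely so that the Stirling-type ratio $(\eps\log\log x)^k/k!$ (with $k = \lfloor \eps\delta \log\log x\rfloor + 1$) matches the density $|\MM_F(x)| \sim x(\log x)^{\eps\delta - 1}$ from Proposition~\ref{passy}, yielding the required $\sum_n T(n)^2 \le \tfrac{6d+1}{2}\,|\MM_F(x)|$ for large $x$. The hardest step will be the off-diagonal estimate: one must carefully sum over all overlap sizes $j = \omega(\gcd(m_1,m_2))$ and verify that the $j = 0$ (coprime) contribution dominates, which should be precisely the content of the key lemma proved in Section~\ref{spml}. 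The ``in particular'' statement then follows immediately, since each greedy $m$ has $s_{n_m} \le 6d$, so the greedy elements alone produce at least $|\{\mathrm{greedy}\}|/(6d) \ge |\MM_F(x)|/(12d)$ distinct values of $n_m$.
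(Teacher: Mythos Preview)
Your second-moment target $\sum_{n \le x} T(n)^2 \le \tfrac{6d+1}{2}\,|\MM_F(x)|$ is false already on the diagonal. Every $m \in \MM_F(x)$ satisfies $m \le x/\kappa$ and $\rho_F(m) \ge 1$, so
\[
\#\{n \le x : m \mid F(n)\} \;\ge\; \rho_F(m)\Bigl\lfloor \tfrac{x}{m}\Bigr\rfloor \;\ge\; \lfloor \kappa \rfloor.
\]
Hence
\[
\sum_{n \le x} T(n)^2 \;\ge\; \sum_{n \le x} T(n) \;=\; \sum_{m \in \MM_F(x)} \#\{n \le x : m \mid F(n)\} \;\ge\; \lfloor \kappa \rfloor\,|\MM_F(x)|,
\]
and since $\kappa = \log\log x \to \infty$ this exceeds any fixed multiple of $|\MM_F(x)|$. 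Thus the chain $|\{\text{generous}\}| \le (6d+1)^{-1}\sum T(n)^2$ cannot possibly yield $|\{\text{generous}\}|\le \tfrac12|\MM_F(x)|$. Your off-diagonal observation that $\mathrm{lcm}(m_1,m_2)\gg x$ is correct, but it only bounds each term by $\rho_F(\mathrm{lcm})\le d^{2(k+1)}$; summed over $|\MM_F(x)|^2$ pairs this is of order $x^2(\log x)^{-2+O(\eps)}$, again far larger than $|\MM_F(x)|$.

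The paper proceeds quite differently. It partitions $\{n\le x\}$ according to the size of $\omega_\JJ(F(n))$ (with $\JJ=[y,x]$) into an ``enormous'', a ``large'' and a ``reasonable'' range; for the first two ranges it shows directly that only $o(|\MM_F(x)|)$ of the $m$'s can have $n_m$ there, by combining a count of such~$n$ with a binomial bound on the number of admissible $(k{+}1)$-subsets of the prime divisors of $F(n)$. The decisive input is then Lemma~\ref{lthree}: the set of $n\le x$ with $T(n)>6d$ has cardinality at most $x(\log x)^{-2+30\eps\log(2d)}$. Its proof (Section~\ref{spml}) is not a pairwise sum but a \emph{triple} argument: if $T(n)>6d$ then among the divisors one finds three of the form $m_1P,m_2P,m_3P$ sharing the same large prime $P\ge x^{9/10}$, and one estimates $\sum 1/[m_1,m_2,m_3]$ over such ``cliques''. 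The extra factor in the triple lcm is exactly what pushes the exponent of $\log x$ beyond~$2$, something a second moment cannot achieve. If you want to rescue a moment approach you need at least a third moment together with this clique structure, which is essentially what the paper does.
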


The crucial tool in the proof of this proposition is the following lemma, which might be viewed as a partial ``square-free'' version of Property~\ref{ifew} from Section~\ref{sargdz}. We cannot affirm that $F(n)$ has ``few'' divisors in $\MM_F$ for all~$n$; but we can affirm that, with ``few'' exceptions, $F(n)$ has ``few'' divisors in $\MM_F(x)$. 

\begin{lemma}
\label{lthree}
For sufficiently large~$x$, the set of ${n\le x}$ such that $F(n)$ has more than $6d$ divisors in $\MM_F(x)$, is of cardinality at most ${x(\log x)^{-2+30\eps\log(2d)}}$. 
\end{lemma}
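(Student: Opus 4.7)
My plan is to exploit the decomposition $m = Pm'$ with $P := \pmax(m) \ge x^{9/10}$ and $m' \in \MM_F'(x)$ for each $m \in \MM_F(x)$ (in the notation introduced in the proof of Proposition~\ref{passy}), apply a pigeonhole on the large prime $P$, and reduce the statement to a combinatorial counting sum that will be controlled by the key lemma promised for Section~\ref{spml}.

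First, since $|F(n)| \ll x^d$ for $n \le x$, the value $F(n)$ has at most $D := \lfloor 10d/9 \rfloor$ distinct prime divisors exceeding $x^{9/10}$, once $x$ is large. Hence, if $n$ is bad---i.e., $F(n)$ admits at least $6d + 1$ divisors in $\MM_F(x)$---pigeonhole applied to the $\pmax$-values of those divisors produces a single prime $P \ge x^{9/10}$ with $P \mid F(n)$ together with at least $\lceil (6d+1)/D \rceil \ge 6$ distinct elements $m_1', \ldots, m_6'$ of the set
$$
\MM_F'(x, P) \ := \ \MM_F'(x) \cap \bigl[x/(2\kappa P),\, x/(\kappa P)\bigr],
$$
each dividing $F(n)$; the inequality $\lceil(6d+1)/D\rceil\ge6$ follows from $(6d+1)/D \ge 9(6d+1)/(10d) \ge 5.4$.

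Summing over such configurations yields
$$
|\{n \le x : n \text{ bad}\}|\ \le\ \sum_{P \ge x^{9/10}}\ \sum_{\{m_1', \ldots, m_6'\} \subset \MM_F'(x, P)}\ \bigl|\{n \le x : PM \mid F(n)\}\bigr|,
$$
where $M := \mathrm{lcm}(m_1', \ldots, m_6')$. The innermost count is bounded via the Chinese Remainder Theorem by $d^{1+\omega(M)}(x/(PM)+1)$; since $PM \ge P \max_j m_j' \ge x/(2\kappa)$, this is at most $3\kappa\, d^{1+\omega(M)}$. The task therefore reduces to estimating the double sum
$$
\Sigma\ :=\ \sum_{P \ge x^{9/10}} \sum_{\{m_1', \ldots, m_6'\} \subset \MM_F'(x, P)} d^{\omega(M)}
$$
and proving $\Sigma \le x(\log x)^{-2 + 30\eps\log(2d) + o(1)}$.

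The hard part will be this bound on $\Sigma$. Naive moment estimates such as $\sum_P |\MM_F'(x, P)|^6 \le (\max_P |\MM_F'(x, P)|)^5 \cdot |\MM_F(x)|$ are too loose, because $|\MM_F'(x, P)|$ can reach $x^{1/10 + o(1)}$ for $P$ close to $x^{9/10}$. Instead I would stratify by the number of prime factors $u = \omega(M)$ of the lcm---satisfying $k + 1 \le u \le 6k$, since any $6$ distinct $k$-subsets of primes have union of size $\ge k + 1$ once $k \ge 5$---and by dyadic intervals of $P$ and $M$, then invoke Landau--Selberg asymptotics on the count of square-free integers with $u$ prime factors in $\PP_F \cap [y, x^{1/10}]$, analogous to those underpinning Proposition~\ref{passy}. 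The two factors of $(\log x)^{-1}$ producing the exponent $-2$ arise respectively from Mertens' bound $\sum_{P \ge x^{9/10}} 1/P = O(1)$ (saving $(\log x)^{-1}$ against the density of primes in the allowed range) and from the narrow window $[x/(2\kappa P), x/(\kappa P)]$ confining each $m_j'$. Delivering both factors cleanly, while absorbing the $d^{\omega(M)}$ contribution into the $30\eps\log(2d)$ error term, is precisely the combinatorial content to be carried out by the key lemma deferred to Section~\ref{spml}.
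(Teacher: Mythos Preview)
Your proposal is not a proof of Lemma~\ref{lthree}: you carry out a preliminary reduction and then explicitly defer the ``hard part''---the bound on~$\Sigma$---to ``the key lemma deferred to Section~\ref{spml}''. But Section~\ref{spml} \emph{is} the proof of Lemma~\ref{lthree}; there is no further lemma to invoke. You have set up a problem and declared it solved elsewhere.

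Worse, the reduction you perform is too lossy to be completed along the lines you sketch. By bounding $|\{n\le x: PM\mid F(n)\}|$ by $3\kappa\,d^{1+\omega(M)}$ you throw away the weight $x/(PM)$, and the resulting quantity
$\Sigma=\sum_{P}\sum_{\{m_1',\dots,m_6'\}\subset \MM_F'(x,P)} d^{\omega(M)}$
is essentially an unweighted count of $6$-subsets. Your heuristic for two savings of $(\log x)^{-1}$ does not apply to~$\Sigma$: there is no factor $1/P$ present, so Mertens is irrelevant, and the ``narrow window'' argument cannot by itself beat the trivial first-moment count $\sum_P |\MM_F'(x,P)|=|\MM_F(x)|\asymp x(\log x)^{-1+\eps\delta+o(1)}$. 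Indeed, since the $|\MM_F'(x,P)|$ have average of order~$1$ over the $\asymp x/\log x$ admissible primes~$P$, there is no mechanism in your framework to force $\sum_P\binom{|\MM_F'(x,P)|}{6}$ below $x(\log x)^{-2}$.

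The paper's argument keeps precisely the weight you discarded. It drops~$P$ from the divisibility condition, retains only a \emph{triple} $\{m_1,m_2,m_3\}\subset\MM_F'(x)$ (a ``clique''), and bounds the number of bad~$n$ by $x(\log x)^{O(\eps)}\sum 1/[m_1,m_2,m_3]$, the sum running over cliques. The existence of a common~$P$ is used only to deduce that the $m_i$ lie within a factor~$2$ of each other. The two genuine savings of $(\log x)^{-1}$ then come from iterating a short-interval estimate (Lemma~\ref{lsix}): once when summing over~$m_3$ with $[m_1,m_2]$ fixed, and once when summing over~$m_2$ with~$m_1$ fixed, each time exploiting that the free variable lies in an interval of bounded logarithmic length inside the sparse set $A=\{m\in\MM_F:\pmin(m)\ge y\}$. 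That iterated $1/\mathrm{lcm}$ structure is the heart of the proof, and it is absent from your~$\Sigma$.
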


We postpone the proof of this lemma until Section~\ref{spml}. 

\subsection{Initializing the Proof of Proposition~\ref{psect}}
Starting from this subsection we work on the proof of Proposition~\ref{psect}.

We set ${\JJ=[y,x]}$ and we try to understand the function $\omega_\JJ(F(n))$, where $\omega_\JJ(\cdot)$ is the number of prime factors of the argument in the interval~$\JJ$. We split~$n$ into three sets as follows.
\renewcommand{\theenumi}{\roman{enumi}}
\renewcommand{\labelenumi}{(\theenumi)}
\begin{enumerate}
\item 
\label{ienorm}
$E(x)$ (enormous), which is the set of ${n\le x}$ for which 
$$ 
\omega_\JJ(F(n))\ge 3d (\log\log x)^2.
$$
\item
\label{ilarge} 
$L(x)$ (large), which is the set of ${n\le x}$ for which
$$
\omega_\JJ(F(n))\in [10^5d^2\log\log x, 3d(\log\log x)^2].
$$
\item
\label{ireas} 
$R(x)$ (reasonable), which is the set of $n\le x$ such that 
$$
\omega_\JJ(F(n))\le 10^5d^2\log\log x.
$$
\end{enumerate}

For the purpose of this argument, if ${s=\omega_\JJ(F(n))}$ then we  denote all the prime factors of $F(n)$ in~$\JJ$ by
${p_1<p_2<\cdots<p_s}$
.

We will use the multiplicative function $\rho_F$, defined for a positive integer~$u$ by 
\begin{equation}
\label{erhof}
\rho_F(u)=|\{ 0\le n\le u-1: F(n)\equiv0\bmod u\}|.
\end{equation}
Clearly, $\rho_F(m)\le d^{\omega(m)}$ holds for all squarefree positive integers $m$.  

\subsection{Counting~$m$ with  ${n_m\in E(x)}$}

Since 
${|F(n)|\ll n^d\ll x^d}$
it follows that in case~(\ref{ienorm}), if we put ${U=\lfloor (\log\log x)^2\rfloor}$, then ${p_1\cdots p_U\le x^{1/2}}$ for large $x$.

To count $E(x)$, fix ${p_1<p_2<\cdots<p_U}$ all in~$\JJ$ and let us count the number of ${n\le x}$ such that 
${m_1\mid f(n)}$, where ${m_1=p_1\cdots p_U}$. The number of such~$n$ is 
\begin{equation}
\label{eq:2}
\frac{\rho_F(m_1)}{m_1} x+O(\rho_F(m_1))\ll \frac{d^{\omega(m_1)}}{m_1} x+d^{\omega(m_1)}\ll \frac{d^{\omega(m_1)}}{m_1}x.
\end{equation}
In the middle of~\eqref{eq:2}, the first term ${d^{\omega(m_1)}x/m_1}$ dominates because ${m_1\le x^{1/2}}$. 

We sum up over the possible $m_1$ getting
\begin{equation}
\label{eq:3}
|E(x)|\ll xd^U\sum
\frac{1}{m_1},
\end{equation}
where the sum runs over all square-free~$m_1$ satisfying ${\omega(m_1)=U}$ and having all prime divisors in~$\JJ$. 
We estimate this sum by the multinomial coefficient trick, already used in the proof of Proposition~\ref{passy}: 
\begin{align*}
\sum\frac1{m_1}  \ll  \frac{1}{U!} \left(\sum_{y\le p\le x} \frac{1}{p}\right)^U 
 \ll  \left(\frac{3\log\log x}{U}\right)^{U}
\end{align*}
This gives us the estimate 
\begin{equation*}
|E(x)|\ll  x\left(\frac{3d\log\log x}{U}\right)^{U},
\end{equation*}
which, with our definition ${U=\lfloor (\log\log x)^2\rfloor}$, implies that 
$$
|E(x)|\le xe^{-(1+o(1))(\log\log x)^2 \log \log\log x}
$$
as ${x\to \infty}$.

Having bounded  $|E(x)|$, we may now estimate the number of~$m$ such that ${n_m\in E(x)}$. 
For each  ${n\le x}$ we have ${|F(n)|\ll n^d\le x^d}$ which implies that, for large~$x$, we have ${\omega_\JJ(F(n))\le \log x}$. Thus, for large~$x$, the divisor ${m\mid F(n)}$ with ${\omega(m)=k}$ can be chosen in at most
$$
\binom{\lfloor \log x\rfloor}{k+1}\le (\log x)^{k+1}\ll e^{2(\log\log x)^2}
$$
ways. 
This implies that, as ${x\to\infty}$, 
\begin{align*}
\bigl|\{m\in \MM_F(x): n_m\in E(x)\}\bigr|&\le |E(x)|e^{2(\log\log x)^2}\\
&\le xe^{-(1+o(1))(\log\log x)^2 \log \log\log x}.
\end{align*}
Proposition~\ref{passy} implies that this is $o\bigl(|\MM_F(x)|\bigr)$ as ${x\to\infty}$. 

\subsection{Counting~$m$ with ${n_m\in L(x)}$}

Let us deal with~(\ref{ilarge}) now. We let~$i_0$ and~$i_1$ be the maximal and the minimal positive integers such that ${2^{i_0}\le 10^5d}$ and ${2^{i_1}\ge 3(\log\log x)}$, respectively. Clearly, ${i_1-i_0=O(\log\log\log x)}$. Consider an integer
${j\in [i_0,i_1-1]}$ and denote by $L_j(x)$ the subset of $L(x)$ consisting of~$n$ such that 
$$
\omega_\JJ(F(n))\in [2^j d\log\log x, 2^{j+1}d\log\log x].
$$

We revisit the previous argument. We now take $U=\lfloor 2^{j-1} \log\log x\rfloor$, and let 
${m_1=p_1\cdots p_U}$.
Then $m_1^{2d}\le |F(n)|\ll x^{d}$, therefore $m_1\ll x^{1/2}$. Now exactly as before we prove that 
\begin{equation*}
|L_j(x)|\ll  x\left(\frac{3d\log\log x}{U}\right)^{U},
\end{equation*}
which, with our definition ${U=\lfloor 2^{j-1}\log\log x\rfloor}$, implies that 
$$
|L_j(x)|\ll \frac x{(\log x)^{2^{j-2} \log(2^{j-2}/3d)}}.
$$
Since
$$
\log\frac{2^{j-2}}{3d}\ge \log\frac{2^{i_0-2}}{3d}\ge \log \frac {10^5d}{24d}\ge 8, 
$$
we have
$$
|L_j(x)|\ll \frac x{(\log x)^{2^{j+1}}}.
$$

On the other hand, 
for ${n\in L_j(x)}$ we have ${\omega_\JJ(F(n))\le 2^{j+1}d\log\log x}$. It follows that, for large~$x$,   the number of choices for~$m$ for a given ${n\in L_j(x)}$ is at most
\begin{align}
\binom{\lfloor2^{j+1}d\log\log x\rfloor}{k+1}&\le \frac{\bigl(2^{j+1}d\log\log x\bigr)^{k+1}}{(k+1)!}\nonumber\\
&\le \left(\frac{2^{j+3}d}{\delta\eps}\right)^{2\delta\eps\log\log x}\nonumber\\
\label{eexp}
&=(\log x)^{2\delta\eps\log \left(2^{j+3}d/\delta\eps\right)}.
\end{align}
Since 
$$
\frac{2^{j-1}}{\delta\eps}\ge 2^{i_0-1}\ge \frac {10^5d}4 \ge 10^4d,
$$
we have
$$
\frac{2^{j-1}}{\delta\eps}\ge 2\log\frac{2^{j-1}}{\delta\eps}\ge \log\left(\frac{2^{j-1}}{\delta\eps}\cdot 10^4d\right)\ge 
\log\frac{2^{j+3}d}{\delta\eps},
$$
which shows that the exponent in~\eqref{eexp} does not exceed $2^j$. 

Thus, for large~$x$ 
$$
\bigl|\{m\in \MM_F(x): n_m\in L_j(x)\}\bigr|\le |L_j(x)|(\log x)^{2^j} \ll \frac x{(\log x)^{2^j}} \le \frac x{(\log x)^2},
$$
because ${2^j\ge 2^{i_0}\ge 10^5d/2\ge 2}$. 
Since there are $O(\log\log\log x)$ possible~$j$, we conclude that 
$$
\bigl|\{m\in \MM_F(x): n_m\in L(x)\} \ll  \frac {x\log\log\log x}{(\log x)^2},
$$
which is again $o\bigl(|\MM_F(x)|\bigr)$ as ${x\to\infty}$.

Thus, we have proved that 
\begin{equation}
\label{eosmall}
\bigl|\{m: n_m\in E(x)\cup L(x)\}\bigr|=o\bigl(|\MM_F(x)|\bigr)
\end{equation}
as ${x\to\infty}$.

\subsection{Completing the proof}

We are ready now to complete the proof of Proposition~\ref{psect}. 
It remains to deal with $n\in R(x)$. If $n\in R(x)$, then ${\omega_\JJ(F(n))\le 10^5d^2\log\log x}$. Thus, for  fixed ${n\in R(x)}$ we have 
\begin{align}
\bigl|\{m\in \MM_F(x): n_m=n\}\bigr|&\le \binom{\lfloor 10^5d^2 \log\log x\rfloor }{k+1}\nonumber\\
&\le \frac{(10^5d^2 \log\log x)^{k+1}}{(k+1)!}\nonumber\\
&\le \left(\frac{10^6d^2}{\eps\delta}\right)^{2\eps\delta \log\log x}\nonumber\\
\label{ehowmany}
&= (\log x)^{2\eps\delta\log(10^6d^2/\eps\delta)}.
\end{align}

Now we are done:  Lemma~\ref{lthree} combined with  estimate~\eqref{ehowmany} implies that  there exists at most 
\begin{equation}
\label{elong}
\frac x{(\log x)^{2-30\eps\log(2d)-2\eps\delta\log(10^6d^2/\eps\delta)}}
\end{equation} 
generous ${m\in \MM_F(x)}$ with the property ${n_m\in R(x)}$. When~$\eps$ is chosen as in~\eqref{eeps}, a quick calculation shows that 
$$
30\eps\log(2d)+2\eps\delta\log\left(\frac{10^6d^2}{\eps\delta}\right)<\frac12. 
$$
Hence~\eqref{elong} is $o(|\MM_F(x)|)$ as ${x\to \infty}$. In particular,  when~$x$ is sufficiently large, at least half of elements of $\MM_F(x)$ are greedy.
 \qed

\bigskip

It remains to prove Lemma \ref{lthree}.

\section{Proof of Lemma~\ref{lthree}}
\label{spml}

\textit{We keep the notation of Section~\ref{sset}, especially ${y=\exp((\log x)^{1-\varepsilon})}$. 
}

\subsection{Two Simple Lemmas}

\label{sstwolem}

Let~$A$ be the subset of~$\MM_F$ consisting of~$m$ with ${\pmin(m)\ge y}$. 
We study the set ${A(z)=A\cap[y,z]}$ for ${z\in [y,x]}$. 

\begin{lemma}
\label{lfive}
When~$x$ is sufficiently large we have ${|A(z)|\le z(\log x)^{-1+3\eps}}$ for all ${z\in [y,x]}$. 
\end{lemma}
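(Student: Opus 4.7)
The plan is a short reduction to the classical sieve bound for ``rough'' integers. Every ${m\in A(z)}$ is an integer with ${m\le z}$ and ${\pmin(m)\ge y}$ (the further conditions that~$m$ be square-free and that its prime divisors lie in~$\PP_F$ only shrink the set), so it is enough to bound the counting function
$$
\Phi(z,y):=\bigl|\{n\in\Z_{>0} : n\le z,\ p\mid n\Rightarrow p\ge y\}\bigr|.
$$
We clearly have ${|A(z)|\le \Phi(z,y)}$, so the problem reduces to controlling~$\Phi(z,y)$.

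For this quantity one has the classical uniform upper bound ${\Phi(z,y)\ll z/\log y}$, valid for all ${z\ge y\ge 2}$. I would simply quote it; it follows, for example, from any elementary upper-bound sieve (Brun, Selberg, or Brun-Hooley), from Buchstab's asymptotic ${\Phi(z,y)\sim z\,w(u)/\log y}$ with ${u=\log z/\log y}$ and Buchstab's function~$w$ bounded, or from a direct Rankin-Mertens argument combined with the estimate ${\sum_{p\le y}1/p=\log\log y+M+o(1)}$. Since only the correct order of magnitude is needed, the particular proof of the sieve inequality is immaterial.

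Substituting ${\log y=(\log x)^{1-\eps}}$ gives
$$
|A(z)|\le \frac{C z}{(\log x)^{1-\eps}}=Cz(\log x)^{-1+\eps}
$$
uniformly in ${z\in[y,x]}$, for some absolute constant~$C$. For~$x$ large enough we have ${C\le(\log x)^{2\eps}}$, and the claimed bound ${|A(z)|\le z(\log x)^{-1+3\eps}}$ follows at once. There is no serious obstacle here; the lemma is a routine deployment of a standard sieve estimate, and the factor-of-three slack in the exponent is precisely what absorbs the implicit sieve constant.
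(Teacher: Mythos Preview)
Your proof is correct. You reduce to the classical sieve bound ${\Phi(z,y)\ll z/\log y}$ for rough integers, which is indeed valid uniformly for ${z\ge y\ge 2}$ (by Brun/Selberg in the range ${u\ge 2}$, and by the trivial identification with primes in the range ${1\le u<2}$), and then absorb the implied constant into the spare factor ${(\log x)^{2\eps}}$.

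The paper's argument is different in presentation, though not in spirit. Instead of quoting the rough-number bound directly, it appeals to a multiplicative-function inequality (Lemma~9.6 of De~Koninck--Luca) to obtain
\[
|A(z)|\le \frac{3z}{\log z}\sum_{n\in A(z)}\frac1n,
\]
then bounds ${1/\log z\le (\log x)^{-1+\eps}}$ and estimates the sum by the Euler product ${\prod_{y\le p\le z}(1+1/p)\le(\log x)^{\eps+o(1)}}$. This yields the intermediate exponent ${-1+2\eps+o(1)}$, versus your ${-1+\eps}$, before both are relaxed to ${-1+3\eps}$. Your route is slightly sharper and more direct; the paper's route has the minor advantage of citing a single explicit reference rather than invoking ``the sieve'' generically. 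Neither approach uses the restriction to~$\PP_F$, so nothing is lost in your reduction to~$\Phi(z,y)$.
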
 

\begin{proof}
Let $g(n)$ be the characteristic function of $A$. Then for any ${z>1}$ we have 
$$
\sum_{p\le z}g(p)\log p \le 2z,
$$
and ${g(p^n)=0}$ for ${n\ge2}$. 
Using Lemma~9.6 on page 138 in \cite{DKL12}, we obtain
\begin{equation}
\label{esix}
|A(z)|=\sum_{n\le z} g(n)\le3 \frac z{\log z} \sum_{n\in A(z)} \frac{1}{n}.
\end{equation}
Clearly, ${\log z\ge(\log x)^{1-\eps}}$ for ${z\in [y,x]}$. As for the sum above, we have
$$
\sum_{n\in A(z)} \frac{1}{n}\le   \prod_{y\le p\le z} \left(1+\frac{1}{p}\right) \le  (\log x)^{\eps+o(1)}
$$
as ${x\to\infty}$. Together with \eqref{esix} this finishes the proof.  \qed
\end{proof}

\begin{lemma}
\label{lsix}
Assuming~$x$ sufficiently large, for  ${y\le a\le b\le x}$ we have 
$$
\sum_{\substack{a\le n\le b\\ n\in A}} \frac{1}{n}\le \frac{\log b-\log a+1}{(\log x)^{1-3\eps}}.
$$
\end{lemma}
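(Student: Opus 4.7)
The plan is to deduce Lemma~\ref{lsix} from Lemma~\ref{lfive} by a standard partial summation (Abel summation), which is the natural way to convert an upper bound on a counting function into an upper bound on a weighted harmonic sum.

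More precisely, let $M(z)=|A\cap[a,z]|$, so that $M(z)\le|A(z)|\le z(\log x)^{-1+3\eps}$ for every $z\in[y,x]$ by Lemma~\ref{lfive}. Writing the sum as a Stieltjes integral against $dM$ and integrating by parts, one gets
$$
\sum_{\substack{a\le n\le b\\ n\in A}}\frac{1}{n}=\int_{a^-}^{b}\frac{dM(t)}{t}=\frac{M(b)}{b}+\int_a^b\frac{M(t)}{t^2}\,dt.
$$
Now apply Lemma~\ref{lfive} inside the integral: $M(t)/t\le(\log x)^{-1+3\eps}$ uniformly in $t\in[a,b]\subset[y,x]$. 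The boundary term contributes $(\log x)^{-1+3\eps}$, and the integral contributes
$$
(\log x)^{-1+3\eps}\int_a^b\frac{dt}{t}=(\log x)^{-1+3\eps}(\log b-\log a).
$$
Summing the two gives the desired bound $(\log b-\log a+1)/(\log x)^{1-3\eps}$.

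There is no real obstacle here: the estimate of Lemma~\ref{lfive} is tailor-made so that after partial summation the factor $1/\log z$ one usually gets from Mertens-type asymptotics is replaced by the uniform factor $(\log x)^{-1+3\eps}$, which can be pulled outside the integral and produces exactly the shape $\log(b/a)+1$ that the lemma claims. The only mild point to watch is to use the inequality $M(t)\le|A(t)|$ (rather than trying to estimate $M$ directly) so that Lemma~\ref{lfive} applies; the lower endpoint $a\ge y$ of integration is used precisely to guarantee that the hypothesis $t\in[y,x]$ of Lemma~\ref{lfive} holds throughout.
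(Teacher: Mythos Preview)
Your proof is correct and essentially identical to the paper's: both apply Abel summation and invoke Lemma~\ref{lfive} for the resulting boundary term and integral. The only cosmetic difference is that you use the counting function $M(z)=|A\cap[a,z]|$ starting at~$a$ (so $M(a^-)=0$ and no lower boundary term appears), whereas the paper uses $|A(z)|$ directly and simply drops the nonnegative term $|A(a)|/a$; the resulting inequalities are the same.
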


\begin{proof}
Using Abel summation and Lemma~\ref{lfive}, we obtain
\begin{align*}
\sum_{\substack{a\le n\le b\\ n\in A}} \frac{1}{n}&=\int_a^b\frac{d|A(z)|}z\\
&=\frac{|A(b)|}b-\frac{|A(a)|}a+\int_{a}^b \frac{|A(z)|}{z^2} dz\\
&\le \frac{|A(b)|}b+\frac1{(\log x)^{1-3\eps}}\int_{a}^b \frac{dz}{z}\\
&\le \frac1{(\log x)^{1-3\eps}}+\frac{\log b-\log a}{(\log x)^{1-3\eps}},
\end{align*}
as wanted. \qed

\end{proof}

\subsection{Cliques}
Starting from this subsection we begin the proof of Lemma \ref{lthree}. 
Recall that every ${m\in \MM_F(x)}$ writes as ${m=m_1P}$, where ${P=\pmax(m)\ge x^{9/10}}$. As in Section~\ref{sset} we denote by $\MM'_F(x)$ the set of all~$m_1$ obtained this way. They satisfy~\eqref{emone}, which will be used in the sequel without special reference.

Let ${n\le x}$ be such that $F(n)$ has at least $6d$ distinct divisors in $\MM_F(x)$. Write each of them  ${m_1P}$ as above and let~$s$ be the number of such~$P$. Then
${x^{9s/10} \le |f(n)|\ll x^{d}}$,
so ${s\le 10d/9+o(1)}$ as ${x\to\infty}$. In particular, ${s<2d}$ for large~$x$. Hence among the $6d$  divisors there are three with the same~$P$; write them ${m_1P}$, ${m_2P}$ and $m_3P$.  

Let us call an (unordered) triple of pairwise distinct ${m_1,m_2,m_3\in \MM_F'(x)}$ a \textit{clique} if there exists a prime ${P\ge x^{9/10}}$ such that ${m_1P,m_2P,m_3P\in \MM_F(x)}$. If ${\{m_1,m_2,m_3\}}$ is a clique then ${m_1P,m_2P,m_3P\in [x/(2\kappa), x/\kappa]}$. This implies that in a clique we have
\begin{equation} 
\label{ecli}
\frac{m_j}2\le m_i\le 2m_j
\end{equation} 
for any $i,j$. In addition to this, 
since ${m_1,m_2,m_3}$ in a clique are square-free with the same number of prime factors, we have
\begin{equation}
\label{enot}
\gcd(m_i,m_j)<m_i<[m_i,m_j],
\qquad (i\ne j).
\end{equation} 
where ${[\cdots]}$ denotes the least common multiple. We will repeatedly use these properties. 

\subsection{The Sum over Cliques}

To prove the lemma, it suffices to estimate the number of~$n$ such that $F(n)$ has three distinct divisors forming a clique. When a clique ${\{m_1,m_2,m_3\}}$ is fixed, the  number of such~$n$ is at most
\begin{equation}
\label{enumnfim}
\frac{\rho_F([m_1,m_2,m_3])}{[m_1,m_2,m_3]} x+O(\rho_F([m_1,m_2,m_3])),
\end{equation}
where $\rho_F(\cdot)$ is defined in~\eqref{erhof}.   When~$x$ is large, we have
$$
\omega([m_1,m_2,m_3])\le 3k\le 4\varepsilon \log\log x, 
$$
which implies
$$
\rho_F([m_1,m_2,m_3])\le d^{\omega([m_1,m_2m_3]}\le(\log x)^{4\eps\log d}.
$$
Further, since ${m_i\le x^{1/10}}$, we have ${[m_1m_2,m_3]\le x^{3/10}\le x^{1/2}}$. It follows that in~\eqref{enumnfim} the first term dominates over the second one, and 
the number of our~$n$ (for the fixed ${m_1,m_2,m_3}$) is bounded, for large~$x$, by
$$
x(\log x)^{5\eps\log d} \frac{1}{[m_1,m_2,m_3]}.
$$

Hence the total number of~$n$ (for all possible choices of ${m_1,m_2,m_3}$) is bounded by ${x(\log x)^{5\eps\log d}S}$,  where
$$
S=\sum_{\{m_1,m_2,m_3\}} \frac{1}{[m_1,m_2,m_3]},
$$ 
the summation being over all cliques. The rest of the argument is estimating this sum~$S$.

We write ${S=S'+S''}$, where~$S'$ is the sum over the cliques  with the property
\begin{equation}
\label{eless}
\text{there is a relabeling of the indices such that $[m_1,m_2]< [m_1,m_2,m_3]$,}
\end{equation}
and~$S''$ is over the cliques such that 
\begin{equation}
\label{esame}
[m_1,m_2]=[m_1,m_3]=[m_2,m_3]=[m_1,m_2,m_3].
\end{equation}

\subsection{Estimating~$S'$}
We are starting now to estimate~$S'$. All cliques appearing in this subsection satisfy~\eqref{eless}.

\subsubsection{The estimate with $m_1$ and $m_2$ fixed}
Fix $m_1$ and $m_2$. Then ${m_3\nmid [m_1,m_2]}$ by~\eqref{eless}. Set ${u=\gcd(m_3,[m_1,m_2])}$. With $m_1$ and $m_2$ being fixed, there are at most 
$$
2^{2k}\ll (\log x)^{3\eps\delta}
$$
choices for~$u$ as a divisor of $[m_1,m_2]$.

Writing ${m_3=u v}$. Clearly, ${v\in A}$, where~$A$ is the set from Subsection~\ref{sstwolem}.   Using~\eqref{ecli}, we obtain ${m_1/(2u)\le v\le 2m_1/u}$.
Since~$u$ is a proper divisor of $m_3$, we also have ${v>1}$, which implies ${v\ge y}$, because ${v\in A}$. Also, clearly ${v\le m_3\le x}$. This shows that
\begin{equation}
\label{einter}
\max\left\{y,\frac{m_1}{2u}\right\}\le v\le \min \left\{x,2\frac{m_1}u\right\}.
\end{equation}

We have
${[m_1,m_2,m_3]=[m_1,m_2]v}$. Thus, assuming $m_1$ and $m_2$ fixed, and summing up over all possible~$m_3$, we get 
\begin{align}
\label{einn}
\sum  \frac{1}{[m_1,m_2,m_3]} &  \le   \frac{1}{[m_1,m_2]} \sum_{u\mid [m_1,m_2]} \  \sum_{v\in A\ \text{satisfying~\eqref{einter}} } \frac{1}{v}\\
& \ll 
\frac{1}{[m_1,m_2] (\log x)^{1-4\eps}} \sum_{u\mid [m_1,m_2]} 1\nonumber\\
& \ll  \frac{1}{(\log x)^{1-8\eps} [m_1,m_2]}. \nonumber
\end{align}
Here, in the inner sum in~\eqref{einn}, we applied Lemma~\ref{lsix} with the choices 
$$
b=\min\left\{x,\frac{2m_1}u\right\}, \quad 
a=\max\left\{y,\frac{m_1}{2u}\right\},
$$ 
and we used the fact that ${\log b-\log a\ll 1}$.

\subsubsection{The estimate with~$m_1$ fixed}
We now fix $m_1$ and vary $m_2$. This time we set ${u=\gcd(m_1,m_2)}$ and again write $m_2=uv$. There are at most ${2^k\ll (\log x)^{2\eps\delta}}$ choices for~$u$. Furthermore, it follows from~\eqref{enot} that~$u$ is a proper divisor of $m_2$, which implies ${v>1}$. Thus, our~$v$ again belongs to the set~$A$ and
satisfies~\eqref{einter}.

Keeping $m_1$ fixed, we argue as above: 
\begin{align*}
\sum \frac{1}{[m_1,m_2]} & =  \frac{1}{m_1} \sum_{u\mid m_1}\ \sum_{v\in A\ \text{satisfying~\eqref{einter}} } \frac1v\\
&\ll  \frac{1}{m_1 (\log x)^{1-4\eps}} \sum_{u\mid m_1} 1\\
& \ll  \frac{1}{m_1 (\log x)^{1-7\eps}}.
\end{align*}

\subsubsection{Estimating~$S'$}
Now we are ready to estimate~$S'$:
$$
S'\ll \frac1{(\log x)^{2-15\eps}} \sum_{m_1\in {\mathcal M}_F'} \frac{1}{m_1}\ll \frac1{(\log x)^{2-17\eps}},
$$
where for the last estimate we used~\eqref{eq:MFprime}.

\subsection{Estimating $S''$}
Now let ${\{m_1,m_2,m_3\}}$ be a clique satisfying~\eqref{esame}. Setting ${u=\gcd(m_1,m_2,m_3)}$ and ${v_i=[m_1,m_2,m_3]/m_i}$, we obtain 
\begin{gather*}
m_1=uv_2v_3,\quad m_2=uv_1v_3, \quad m_3=uv_1v_2, \\ [m_1,m_2]=[m_1,m_3]=[m_2,m_3]=[m_1,m_2,m_3]=uv_1v_2v_3.
\end{gather*}
We again use~\eqref{enot} to obtain ${v_i>1}$, which implies ${v_i\ge y}$ because ${v_i\in A}$. Also,  ${v_i\le x}$.  Together with~\eqref{ecli} this gives
\begin{equation}
\label{enewinter}
\max\left\{y,\frac {v_1}2\right\}\le v_i\le \min\{x, 2v_1\} \qquad (i=2,3). 
\end{equation}

It follows that 
$$
S'' \le \sum_{\substack{u,v_1,v_2,v_3\in A\\
\text{satisfying~\eqref{enewinter}}}}\frac1{uv_1v_2v_3}.
$$
When~$u$ and~$v_1$ are fixed, we have 
$$
\sum_{\substack{v_2,v_3\in A\\
\text{satisfying~\eqref{enewinter}}}}\frac1{uv_1v_2v_3}\le 
\frac1{uv_1}\left(\sum_{\substack{v\in A\\
\max\{y,v_1/2\}\le v\le \min\{x, 2v_1\}}}\frac1v\right)^2,
$$
and the squared sum can be estimated, using Lemma~\ref{lsix}, as ${O\bigl((\log x)^{-1+4\eps}\bigr)}$. Hence 
\begin{equation}
\label{essone}
S''\ll \frac1{(\log x)^{2-8\eps}}\sum\frac1{uv_1},
\end{equation}
the latter sum being over all possible values of~$u$ and~$v_1$. 

To estimate the latter, we make the following observations.

\begin{itemize}
\item
The number $uv_1$ belongs to~$A$, satisfies ${y\le uv_1\le x}$ and ${\omega(yv_1)\le k}$.

\item
Given ${m\in A}$ with ${\omega(m)\le k}$, it can be written as ${m=uv_1}$ in at most ${2^k\ll (\log x)^{2\eps}}$ ways. 
\end{itemize}
It follows that 
\begin{equation}
\label{esstwo}
\sum\frac1{uv_1}\ll (\log x)^{2\eps}\sum_{m\in A\cap[y,x]}\frac1m \ll (\log x)^{6\eps},
\end{equation}
the latter sum being $O\bigl((\log x)^{4\eps}\bigr)$ by Lemma~\ref{lsix} with ${b=x}$ and ${a=y}$. 

Combining~\eqref{essone} and~\eqref{esstwo},  we conclude that 
$$
S'' \ll \frac1{(\log x)^{2-14\eps}}. 
$$

\subsection{Proof of Lemma~\ref{lthree}}

Thus, for large~$x$, the total number of~$n$ such that $F(n)$ has at least $6d$ distinct divisors in $\MM_F(x)$ is bounded by 
$$
x(\log x)^{5\eps\log d}(S'+S'')\ll \frac x{(\log x)^{2-5\eps\log d-17\eps}},
$$
which proves Lemma~\ref{lthree}.

\section{Proof of Theorem~\ref{tmain}}
\label{sptmain}

We are ready now to prove Theorem~\ref{tmain}. Thus, let~$X$ and~$t$ be as in Theorem~\ref{tmain}, and, as in Section~\ref{sargdz}, let  ${F(T)\in \Z[T]}$ be the primitive  separable  polynomial whose roots are exactly the finite critical values of~$t$, with ${d=\deg F}$. We use  all notation and conventions from Section~\ref{sset}. In particular, we fix~$\eps$ satisfying ${0<\eps\le 1/2}$ (which will be specified later)  and for sufficiently large~$x$ we consider the set $\MM_F(x)$. 

Recall (see Section~\ref{sargdz}) that we denote by~$K_n$ the field $\Q(t^{-1}(n))$.  We call ${m\in \MM_F}$ primitive for~$n$ if every ${p\mid m}$ ramifies in~$K_n$, but for every ${n'<n}$ some ${p\mid m}$ does not ramify in~$K_{n'}$. Clearly, if~$n$ admits a primitive ${m\in \MM_F}$ then the field~$K_n$ is distinct from ${K_1, \ldots, K_{n-1}}$. 

Our starting point is Corollary~\ref{cprim}, which asserts that every ${m\in \MM_F}$ with the property ${\pmin(m)>\omega(m)}$  serves as a primitive for some ${n_m\le m( \omega(m)+1)}$. If ${m\in \MM_F(x)}$ then this property is trivially satisfied when~$x$ is large enough; hence every  ${m\in \MM_F(x)}$ serves as primitive for some ${n_m\le m(k+2)}$, and we have
\begin{equation}
\label{elex}
n_m\le m(k+2)\le \frac x{\log\log x}{(\eps\delta\log\log x+3)}\le x,
\end{equation}
again provided~$x$ is sufficiently large.

 Set 
\begin{align*}
\NN(x)&=\{n_m: m\in \MM_F(x)\},\\ 
\NN'(x)&=\{n\in \NN(x): \text{the fiber $t^{-1}(n)$ is $\Q$-irreducible}\}. 
\end{align*}
It follows from~\eqref{elex} that
$$
\NN'(x)\subset\NN(x)\subset [1,x],
$$
and Hilbert's Irreducibility Theorem implies that 
\begin{equation}
\label{ehit}
|\NN'(x)|\ge |\NN(x)|-O(x^{1/2}). 
\end{equation}
The fields 
$$
K_n \qquad (n\in \NN(x)\ )
$$
are pairwise distinct, and, since for ${n\in \NN'(x)}$ the field~$K_n$ is the Galois closure of $\Q(P_n)$, the fields 
\begin{equation}
\Q(P_n)\qquad (n\in \NN'(x)\ )
\end{equation}
are pairwise distinct as well. 

Thus, to prove Theorem~\ref{tmain}, we only have to show that, with suitable choice of~$\eps$,  the lower estimate 
\begin{equation}
\label{etired}
|\NN'(x)|\ge \frac x{(\log x)^{1-\eta}}
\end{equation}
holds for sufficiently large~$x$. Here~$\eta$ is a positive number depending only on~$d$ (which, through~\eqref{erihu}, translates into dependence in~$\nu$ and~$\bfg$). 

This can be accomplished using the results of Sections~\ref{sset} and~\ref{sgrege}. Since every ${p\mid m}$ ramifies in~$K_{n_m}$, we have ${m\mid F(n_m)}$ (see Property~\ref{iramdiv} in Section~\ref{sargdz}). Hence Proposition~\ref{psect} applies to our definition of~$n_m$. Thus, setting~$\eps$ as in~\eqref{eeps},  Proposition~\ref{psect} implies that, for sufficiently large~$x$, we have 
${|\NN(x)|\ge (12d)^{-1}|\MM_F(x)|}$. Together with Proposition~\ref{passy} this implies that
${|\NN(x)|\ge x(\log x)^{-1+\delta\eps+o(1)}}$ as ${x\to \infty}$, which, combined with~\eqref{ehit}, implies the same lower estimate for $|\NN'(x)|$. In particular, for sufficiently large~$x$ we have~\eqref{etired} with ${\eta=\delta\eps/2}$. 

In view of~\eqref{edelffac} and~\eqref{eeps} we have 
${\eta\ge 10^{-4}(d\log(2d))^{-1}}$. 
Using~\eqref{erihu} we deduce that ${\eta\ge 10^{-6}\bigl((\bfg+\nu)\log(\bfg+\nu)\bigr)^{-1}}$. \qed




{\footnotesize

}

\end{document}